\documentclass[12pt]{amsart}


\hyphenation{ge-ne-ral pro-duct e-ve-ry}

\def \R{\mathbb{R}}

\newtheorem{theorem}{Theorem}[section]
\newtheorem{coro}[theorem]{Corollary}

\newtheorem{lema}[theorem]{Lemma}
\newtheorem{rema}[theorem]{Remark}
\newtheorem{rem}[theorem]{Remark}

\newtheorem{ejem}[theorem]{Example}

\newtheorem{prop}[theorem]{Proposition}
\newtheorem{teor}[theorem]{Theorem}

\usepackage{url,hyperref}
\listfiles

\hypersetup{}

\usepackage{euscript,color}

\definecolor{red}{rgb}{1,0,0}

\hypersetup{
    bookmarks=true,         
    unicode=false,          
    pdftoolbar=true,        
    pdfmenubar=true,        
    pdffitwindow=false,     
    pdfstartview={FitH},    
    pdftitle={My title},    
    pdfauthor={Author},     
    pdfsubject={Subject},   
    pdfcreator={Creator},   
    pdfproducer={Producer}, 
    pdfkeywords={keyword1} {key2} {key3}, 
    pdfnewwindow=true,      
    colorlinks=true,       
    linkcolor=black,          
    citecolor=green,        
    filecolor=magenta,      
    urlcolor=blue         
}

\def\d{\overrightarrow{\mathrm{d}}}

\def\R{\mathbb{R}}
\def\dif{\mathrm{d}}
\def\H{{\mathrm{H}}}
\def\Di{{\mathrm{D}}}
\def\No{{\mathrm{N}}}
\def\Tr{{\mathrm{Tr}}}
\def\adj{{\mathrm{adj}}}
\def\det{{\mathrm{det}}}

\begin{document}

\title[Minimal helix submanifolds and Minimal Riemannian foliations]
{Minimal helix submanifolds and Minimal Riemannian foliations}

\author[A. J. Di Scala]{Antonio J. Di Scala}
\author[G. Ruiz-Hern\'andez]{Gabriel Ruiz-Hern\'andez}

\subjclass{Primary 53C40, 53C42}
\keywords{Helix submanifolds, constant angle submanifolds, minimal submanifolds.}
\thanks{This research was partially supported by Ministero degli Affari Esteri from Italy and CONACYT from Mexico. }

\date{\today}

\maketitle

\begin{abstract}
We investigate minimal helix submanifolds of any dimension and codimension immersed in Euclidean space.
Our main result proves that a ruled minimal helix submanifold is a cylinder. As an application we classify
complex helix submanifolds of $\mathbb{C}^n$:  They are extrinsic products with a complex line as a factor.
The key tool is Corollary \ref{offsets} which allows us to classify Riemannian foliations of
open subsets of the Euclidean space with minimal leaves.
Finally, we consider the case of a helix hypersurface with constant mean curvature and prove that it is
either a cylinder or an open part of a hyperplane.
\end{abstract}

\section{Introduction}

A submanifold $M \subset \mathbb{R}^{n}$ is called a \emph{helix} with respect to $\d \in \R^{n}$ if the angle \[ \theta(p) := \angle(T_pM, \d)\]  between the tangent space $T_pM$ and a fixed direction $\d \in \R^n$ is constant, i.e. $\theta(p)$ does not depend upon $p \in M$. Observe that the angle $\theta(p)$ is related to the splitting $\d = \d^{\top} + \d^{\perp}$ according to the tangent and normal components of $\d$ at $p \in M$.
Indeed,  the norm $\|\d^{\top}\|$ at $p \in M$ is given by $\| \d\|  \mathrm{cos}(\theta(p))$. Then $M \subset \mathbb{R}^{n}$ is a helix with respect to $\d$ if and only if the norm $\|\d^{\top}\|$ is constant along $M$.
Observe that $\d^{\top}$ is the gradient of the height function $h_{\d}(x) := \langle x, \d \rangle$ by \cite[Proposition 4.1.1, page 65]{PT88}. So $M$ is a helix with respect to $\d$ if and only if the height function $h_{\d}$ is a so called eikonal function i.e. the norm of its gradient $\nabla_M h_{\d}$ is constant on $M$.\\

In this paper we are interested in the local geometry of the helix $M$ i.e. all the claims are of local nature unless otherwise specified.
Important examples of helix submanifolds are totally geodesic submanifolds of shadow boundaries.
We refer to \cite{Ghomi} and \cite{RH} for details. Helix submanifolds are also called constant angle submanifolds and had been studied in other ambient spaces, see for example
\cite{Di} and \cite{Mu}.

Let us briefly recall the two methods to study helix submanifolds that were developed in \cite{mona}, \cite{kodai}.
Namely, the {\bf projection method} and the {\bf slice method}.

The {\bf projection method} considers the helix $M$  as the graph of a function $f$ defined on the projection $B$ of $M$ to an hyperplane $H$ orthogonal to $\d$. More precisely, let $M \subset \mathbb{R}^{n}$ be a helix submanifold of angle $\theta \notin \{ 0,\frac{\pi}{2}\}$ with respect to the unit vector $\d$.
Let $\pi : \R^n \rightarrow H$ be the orthogonal projection to an hyperplane $H$ orthogonal to $\d$.
The restriction of $\pi$ to $M$ is an immersion and $B = \pi(M)$ is called the base of the helix $M$.
Then $M$ looks locally as the graph of a function $f : U \subset B \rightarrow \R$. That is to say,
$M$ is locally the image of the map $ \phi: B \rightarrow \R^n = H \times \R $ defined as
\begin{equation}
\label{eqn:projection-method}
 \phi(p) := (i(p) , f(p))
\end{equation}
where $i$ is the canonical inclusion of $\pi(M)$.

Conversely we can start from a submanifold $B \subset H$ and a function $f \in C^{\infty}(B)$ and construct $M \subset \R^n$ as the graph of $f$ (see Theorem \ref{teor:eikonalvshelix}).\\

The {\bf slice method} can be used when the helix is ruled, i.e. the integral curves of $T:= \frac{\d^{\top}}{\|\d^{\top}\|}$ are geodesics in the Euclidean space
\cite[page 194, Definition 2.3]{kodai}. Let us briefly recall the local structure of a ruled helix, for more details see \cite[Theorem 4.6, page 202]{kodai}. Let $L = M \bigcap H$ be a slice of $M$ where $H$ is a hyperplane perpendicular to $\d$. Observe that $T$ is a normal vector field of $L = M \bigcap H$.

If the helix is ruled then $M$ is the union of the parallel manifolds $L_{s T}$ to $L$ in the $T$-direction.
Namely, $M$ is the image of the map \[ e: L \times (-\epsilon,\epsilon) \rightarrow \R^n \] defined as
\[ e(p,s) := p + s T(p) \, .\]

The projection and slice methods are related via the height function $h_{\d}$ in the following way.
Let $M$ be the helix submanifold, $B \subset H$ be its base and $f: B \rightarrow \R$ be the eikonal function as explained above. Then $h_{\d} = f \circ \pi$ hence  $\nabla_M h_{\d}$ is parallel to the vector field $T$.
Therefore the slices of $M$ with hyperplanes orthogonal to $\d$ are the parallel submanifolds $L_{s T}$.

\vspace{0.5cm}

Here is the main result of this paper.

\begin{teor}
\label{maintheorem}
If $M \subset \mathbb{R}^n$ is a full minimal ruled helix with respect to $\d \in \R^{n}$ then $\d$ is tangent to $M$.
That is to say, the helix angle $\theta$ is zero and $M$ is a cylinder over a minimal submanifold contained in a hyperplane $H$ orthogonal to $\d$.
\end{teor}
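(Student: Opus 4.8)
The plan is to combine the slice method with the minimality of $M$ in order to understand the slices $L_c = M \cap H_c$, where $H_c = \{x : \langle x, \d\rangle = c\}$ is the family of parallel hyperplanes orthogonal to $\d$, and then to feed the resulting picture into the offset rigidity of Corollary \ref{offsets}. Throughout set $T = \d^{\top}/\|\d^{\top}\|$ and write $\d = \cos\theta\, T + \sin\theta\, N$, where $N$ is the unit vector in the direction of $\d^{\perp}$. Since $M$ is ruled, the integral lines of $T$ are straight lines, which forces $\alpha(T,T) = 0$ and $\nabla^M_T T = 0$ for the second fundamental form $\alpha$ of $M$, and shows that $T$, and hence $N$, is constant along each ruling. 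Because $M$ is minimal, the restriction of the linear height function $h_{\d}(x) = \langle x, \d\rangle$ to $M$ is harmonic, so $\Delta_M h_{\d} = 0$, while $\nabla_M h_{\d} = \d^{\top} = \cos\theta\, T$ is eikonal of constant norm $\cos\theta$.

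First I would show that each slice $L_c$ is a minimal submanifold of the hyperplane $H_c$. The crucial identity is
\[ \mathrm{div}_M T = \frac{1}{\cos\theta}\,\Delta_M h_{\d} = 0, \]
which says that $L_c$, viewed as a hypersurface of $M$ with unit normal $T$, has vanishing mean curvature inside $M$. The trace over $T_pL_c$ of the second fundamental form of $L_c$ in $\R^n$ equals the mean curvature of $L_c$ in $M$ plus $\mathrm{tr}\,\alpha - \alpha(T,T)$; the first summand vanishes by the displayed identity and the second by minimality of $M$ together with $\alpha(T,T)=0$. Hence $L_c$ is minimal in $\R^n$, and since each $H_c$ is totally geodesic, $L_c$ is minimal in $H_c \cong \R^{n-1}$ as well.

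Next I would recognise the slices as offsets of one another. Assume $\theta \in (0,\tfrac{\pi}{2})$ (if $\theta = \tfrac{\pi}{2}$ then $T$ is undefined and $M$ is not ruled). Projecting orthogonally onto $H = H_0$ along $\d$, the slice $L_c$ maps to $\{p + (c\tan\theta)\,\nu(p) : p \in L_0\}$, where $\nu = (T - \cos\theta\,\d)/\sin\theta$ is a unit normal field of $L_0$ inside $H$; that is, $\pi(L_c)$ is exactly the normal offset of $L_0$ in $H$ at distance $c\tan\theta$. Thus the minimal slices form a Riemannian foliation with minimal leaves, and Corollary \ref{offsets} forces the leaves to be open subsets of parallel affine subspaces. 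In particular $\nu$ is a constant vector, so $T = \cos\theta\,\d + \sin\theta\,\nu$ is constant, all rulings are parallel, and $M$ is the cylinder over the affine subspace $L_0$ in the fixed direction $T$, hence an affine subspace of $\R^n$.

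Here fullness closes the argument: for $\theta \neq 0$ the offsets are genuinely distinct ($\tan\theta \neq 0$), so the previous paragraph makes $M$ a proper affine subspace, contradicting that $M$ is full. Therefore $\theta = 0$, so $\d$ is tangent, $T = \d$, and $M$ is ruled by lines in the direction $\d$, i.e.\ a cylinder over $L_0 = M \cap H$; minimality of $M$ is then equivalent to minimality of $L_0$ in $H$. The main obstacle is precisely the offset step: one must verify that the family of minimal slices genuinely satisfies the hypotheses of Corollary \ref{offsets} as a Riemannian foliation with minimal leaves, and extract from it that the leaves, and hence the ruling direction $T$, are parallel. By comparison, the reduction to minimal slices is routine, resting only on the harmonicity of $h_{\d}$ and on the vanishing $\alpha(T,T) = 0$.
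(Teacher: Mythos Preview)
Your strategy is the paper's own: show the slices $L_c$ are minimal, feed them into the offset rigidity behind Corollary~\ref{offsets}, deduce that the ruling direction $T$ is constant, and contradict fullness. The reduction to minimal slices via $\operatorname{div}_M T = 0$ and $\alpha(T,T)=0$ is clean and matches Proposition~\ref{decomposition}; the projection $\pi(L_c) = L_0 + (c\tan\theta)\,\nu$ is correct and puts you exactly in the situation of Corollary~\ref{offsets} with $\xi = \nu$.

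The gap is in what you extract from Corollary~\ref{offsets}. That corollary says only that the normal field $\nu$ is constant in the ambient space (equivalently $\nabla^\perp\nu = 0$ and $A_\nu = 0$); it does \emph{not} say the leaves are open subsets of affine subspaces. You are conflating it with Theorem~\ref{MinimalRiemannianFoliation}, whose conclusion requires offsetting in \emph{every} normal direction, whereas here you offset in the single direction $\nu$. Consequently your assertion that $L_0$ is an affine subspace, and hence that $M$ itself is an affine subspace, is unjustified. (Also, the phrase ``Riemannian foliation with minimal leaves'' is off: when $\operatorname{codim} M > 1$ the projected slices have codimension $>1$ in $H_0$ and do not foliate any open set.)

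The fix is immediate and is exactly what the paper does. From Corollary~\ref{offsets} you legitimately get $\nu$ constant along $L_0$, hence $T = \cos\theta\,\d + \sin\theta\,\nu$ is constant along $L_0$, and since $T$ is already constant along each ruling, $T$ is constant on $M$. Then $\d^{\perp} = \d - \cos\theta\,T$ is a \emph{nonzero} (because $\theta \neq 0$) constant vector everywhere normal to $M$, so $M$ lies in an affine hyperplane orthogonal to $\d^{\perp}$, contradicting fullness. No claim about $L_0$ being affine is needed.
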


We do not know if the hypothesis of being {\bf ruled} can be omitted in the above statement.\\

Then we obtain the classification of complex helix submanifolds of $\mathbb{C}^n$.

\begin{teor}\label{complex} Let $M^{m} \subset \mathbb{C}^n$ be a full
complex submanifold of complex dimension $m$.
Assume that $M$ is a helix of angle $\theta$ with respect to a direction
$\d \in \mathbb{C}^n$. Then $\theta = 0$ and so
$M$ is locally an extrinsic product \[ M = \mathbb{C} \times N \subset
\mathbb{C} \times \mathbb{C}^{n-1} \, ,\]
where $N \subset \mathbb{C}^{n-1}$ is a complex submanifold.
\end{teor}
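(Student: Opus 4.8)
The plan is to deduce Theorem \ref{complex} from Theorem \ref{maintheorem}. Two of its three hypotheses come for free: $M$ is full by assumption, and a complex submanifold of $\mathbb{C}^n$ is automatically minimal. Moreover the angle cannot be $\frac{\pi}{2}$, for otherwise $\d^{\top}\equiv 0$, the height function $h_{\d}$ would be constant, and $M$ would lie in an affine hyperplane, contradicting fullness. Thus the whole matter reduces to one point: a full complex helix is \emph{ruled}, i.e. the integral curves of $T=\d^{\top}/\|\d^{\top}\|$ are straight lines of $\mathbb{R}^{2n}=\mathbb{C}^n$. Once this is known, Theorem \ref{maintheorem} forces $\theta=0$, so $\d$, and hence $J\d$, is tangent to $M$; the complex line $\mathbb{C}\d$ is then a parallel tangent direction and $M$ splits as the extrinsic product $\mathbb{C}\times N$.

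To prove ruledness I would first record the standard facts for a complex submanifold: $J$ preserves both $TM$ and the normal bundle, it is parallel along $M$, each shape operator anticommutes with $J$, and $\alpha(JX,Y)=J\alpha(X,Y)$. The two relevant height functions assemble into the holomorphic function $g=h_{\d}+i\,h_{J\d}$, whose real and imaginary gradients are $W:=\d^{\top}$ and $JW=(J\d)^{\top}$. Writing $S:=A_{\d^{\perp}}$, the constancy of $\d$ in $\mathbb{R}^{2n}$ gives the Gauss relation $\nabla_X W=SX$, while the helix (eikonal) condition $\|W\|\equiv\cos\theta$ yields $\langle SX,W\rangle=\tfrac12 X\|W\|^2=0$ for all $X$, hence $SW=0$ and therefore also $S(JW)=-JSW=0$.

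The heart of the argument is the analysis of the complex line field $\mathcal{D}=\mathrm{span}\{W,JW\}$. Using $SW=S(JW)=0$ together with the anticommutation $SJ=-JS$ and the parallelism of $J$, all four covariant derivatives $\nabla_W W$, $\nabla_W(JW)$, $\nabla_{JW}W$, $\nabla_{JW}(JW)$ vanish; in particular $[W,JW]=0$, so $\mathcal{D}$ is integrable, and its leaves $\Sigma$ are complex curves, totally geodesic in $M$, carrying $\{W,JW\}$ as a parallel orthogonal frame of constant length. A surface with a parallel frame is intrinsically flat, so each $\Sigma$ has vanishing Gauss curvature. But $\Sigma$ is a complex, hence minimal, curve in $\mathbb{C}^n$, and for such a surface the Gauss equation reads $K_{\Sigma}=-\|\alpha^{\Sigma}(e_1,e_1)\|^2-\|\alpha^{\Sigma}(e_1,e_2)\|^2$; thus $K_{\Sigma}\equiv 0$ forces $\alpha^{\Sigma}\equiv 0$, i.e. $\Sigma$ is an open piece of an affine complex line. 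Since $\nabla_W W=0$, the ambient acceleration of $W$ is normal to $\Sigma$ and equals $\alpha^{\Sigma}(W,W)=0$; hence the integral curves of $T$ are straight lines and $M$ is ruled.

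The main obstacle is precisely this ruledness step, and it is instructive to see why it is not formal: minimality alone is too weak, since Bochner's formula only equates $\|\mathrm{Hess}\,h_{\d}\|^2$ with $-\mathrm{Ric}(\d^{\top},\d^{\top})$ and lets the two terms cancel without either vanishing. The proof must therefore genuinely exploit the complex structure, which it does by exhibiting the leaves as flat minimal complex curves and invoking their rigidity. With ruledness in hand, Theorem \ref{maintheorem} gives $\theta=0$ and presents $M$ as a cylinder over $L=M\cap H$; the $J$-invariance of $TM$ upgrades the single real ruling direction $\d$ to the full complex line $\mathbb{C}\d$, and the resulting splitting $M=\mathbb{C}\times N$ with $N\subset\mathbb{C}^{n-1}$ complex is exactly the assertion of the theorem.
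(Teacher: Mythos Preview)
Your proof is correct and follows essentially the same strategy as the paper: build the $J$-invariant distribution $\mathcal{D}=\mathrm{span}\{T,JT\}$, show it is integrable with leaves that are flat minimal (complex) surfaces of $\mathbb{C}^n$, hence totally geodesic, conclude that the integral curves of $T$ are straight lines so that $M$ is ruled, and then invoke Theorem \ref{maintheorem}. The only cosmetic difference is that the paper packages the step ``flat $+$ minimal surface $\Rightarrow$ totally geodesic'' into a separate lemma about minimal helix surfaces, whereas you obtain flatness of the leaves directly by exhibiting $\{W,JW\}$ as a parallel frame via the relations $\nabla_X W=A_{\d^\perp}X$ and $A_{\d^\perp}J=-JA_{\d^\perp}$.
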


It is important to notice that the above theorem is not a direct consequence of Theorem \ref{maintheorem}
since we do not assume the complex helix submanifold to be ruled.\\

The main tool to prove the above theorems is Lemma \ref{LA} which we think is interesting in itself.
Indeed, in Submanifold Geometry \cite{Olmos} it is well-known that if the parallel manifolds $M_{t \xi}:= M + t\xi \subset \R^n$ in the direction of a normal parallel vector field $\xi$
are minimal submanifolds for small values of $t$ then $\xi$ is constant in $\R^n$.
We show that this is still true just assuming that $\xi$ has constant length (i.e. the hypothesis on $\xi$ of being normal parallel is not necessary).
Namely, we have the following corollary of Lemma \ref{LA}.

\begin{coro}
\label{offsets}
Let $M \subset \mathbb{R}^n$ be a submanifold and let $\xi \in \Gamma(\nu(M))$ be a normal vector field of constant length i.e. $\| \xi \| = constant $.
If the submanifolds $M_{t \xi} := M + t \xi \subset \R^n$
are minimal submanifolds for small values of $t$ then $\xi$ is constant in $\R^n$, i.e. $\xi$ is parallel with respect the normal connection and $A_{\xi} \equiv 0$, where $A_{\xi}$ is the shape operator of $M$ in direction $\xi$.
\end{coro}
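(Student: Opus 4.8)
The plan is to study the one–parameter family of offsets $F_t\colon M\to\R^n$, $F_t(p)=p+t\xi(p)$, whose images are the $M_{t\xi}$, and to extract the conclusion from the fact that the mean curvature vector $H_{M_{t\xi}}$ vanishes for \emph{all} small $t$ (so every Taylor coefficient in $t$ must vanish). After rescaling we may assume $\|\xi\|\equiv 1$. The first observation is that constancy of $\|\xi\|$ forces $\langle\nabla^\perp_X\xi,\xi\rangle=0$ for every tangent $X$; combined with $dF_t(X)=(I-tA_\xi)X+t\,\nabla^\perp_X\xi$ this shows that $\xi$ remains a \emph{unit} normal field to each leaf $M_{t\xi}$, so the segments $t\mapsto p+t\xi(p)$ are orthogonal geodesics common to the whole family. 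In a local orthonormal frame $\{e_i\}$ of $TM$ the induced metric on $M_{t\xi}$ is then represented by $g(t)=(I-tA_\xi)^2+t^2B$, where $B_{ij}=\langle\nabla^\perp_{e_i}\xi,\nabla^\perp_{e_j}\xi\rangle$.

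I would split $H_{M_{t\xi}}=0$ into its $\xi$–component and its components orthogonal to $\xi$. Since $\xi$ is a unit normal variation field of the family, the $\xi$–component is governed by the Jacobian $J(p,t)=\sqrt{\det g(t)}$ through the first–variation identity $\partial_t\log J=-\langle H_{M_{t\xi}},\xi\rangle$; minimality then gives $\det g(t)\equiv1$. For the full vector I would compute $H_{M_{t\xi}}$ to first order in $t$. Because $M=M_0$ is minimal the $t^0$ term vanishes, so $H_{M_{t\xi}}=(V_t)^{\perp_t}$ with $V_t=O(t)$, and the first–order tilting of the normal space contributes nothing; a direct Gauss–Weingarten computation yields
\[
H_{M_{t\xi}}=t\bigl(\Delta^\perp\xi+\widetilde{A}_\xi\bigr)+O(t^2),
\]
where $\Delta^\perp$ is the Laplacian of the normal connection and $\widetilde{A}_\xi$ is the normal vector determined by $\langle\widetilde{A}_\xi,\eta\rangle=\mathrm{Tr}(A_\xi A_\eta)$. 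This is the linear content of Lemma \ref{LA}, and minimality at first order gives the vector equation $\Delta^\perp\xi=-\widetilde{A}_\xi$.

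Next I would feed in the constant–length hypothesis. Differentiating $\|\xi\|^2\equiv\mathrm{const}$ twice gives $\langle\Delta^\perp\xi,\xi\rangle=-\|\nabla^\perp\xi\|^2$, while pairing the first–order equation with $\xi$ gives $\langle\Delta^\perp\xi,\xi\rangle=-\langle\widetilde{A}_\xi,\xi\rangle=-\|A_\xi\|^2$; hence
\[
\|A_\xi\|^2=\|\nabla^\perp\xi\|^2 .
\]
(The same relation is what $\det g(t)\equiv1$ produces at order $t^2$, a useful consistency check.) Once $A_\xi$ and $\nabla^\perp\xi$ are both shown to vanish, the Weingarten formula $D_X\xi=-A_\xi X+\nabla^\perp_X\xi=0$ shows that $\xi$ is constant in $\R^n$, which is the assertion.

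\textbf{Main obstacle.} The identity $\|A_\xi\|^2=\|\nabla^\perp\xi\|^2$ is a balance between two nonnegative quantities and does not by itself force either to vanish; worse, the $\xi$–direction (equivalently, the volume $\det g(t)$) keeps reproducing only this single relation at every order. The crux is therefore to \emph{separate} the two terms, which genuinely requires the components of $H_{M_{t\xi}}$ orthogonal to $\xi$. Here I would exploit the self–similarity of the hypothesis: each leaf $M_{t\xi}$ again carries $\xi$ as a unit normal with minimal offsets, so the vector equation $\Delta^\perp\xi=-\widetilde{A}_\xi$ holds along the whole family. Differentiating it along the geodesic flow of $\xi$ — a matrix Riccati computation in flat ambient space — should promote the balance $\|A_\xi\|^2=\|\nabla^\perp\xi\|^2$ to a quantity whose nonvanishing is obstructed, forcing $A_\xi\equiv0$ and $\nabla^\perp\xi\equiv0$. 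I expect this propagation step, i.e. the all–orders expansion packaged in Lemma \ref{LA} in which a coefficient appears as a genuine sum of squares, to be where the real work lies.
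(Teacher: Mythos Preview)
Your setup is correct and matches the paper's: with $D=A_\xi$ (diagonalized) and $N_{ij}=\langle\nabla^\perp_{e_i}\xi,\nabla^\perp_{e_j}\xi\rangle$, the induced metric on $M_{t\xi}$ is $g(t)=\mathbf{1}-2tD+t^2H$ with $H=D^2+N$, and the $\xi$-component of minimality is exactly
\[
\Tr\bigl((D-tH)(\mathbf{1}-2tD+t^2H)^{-1}\bigr)=0\qquad\text{for small }t,
\]
which is the same as your $\det g(t)\equiv 1$. But from here on your argument is genuinely incomplete: you Taylor-expand at $t=0$, obtain only the balance $\|A_\xi\|^2=\|\nabla^\perp\xi\|^2$, and then explicitly flag that separating the two terms is ``where the real work lies''. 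Proposing to use the components of $H_{M_{t\xi}}$ orthogonal to $\xi$ and a Riccati propagation is plausible but not carried out, and there is no indication that it closes.

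The paper avoids your obstacle altogether, and does so using \emph{only} the $\xi$-component you already have. The point of Lemma~\ref{LA} is not a first-order linearization; it is an algebraic trick exploiting the \emph{global} dependence on $t$. Since $(\mathbf{1}-2tD+t^2H)^{-1}=\adj(\cdot)/\det(\cdot)$, the trace above is a rational function of $t$; vanishing on an interval forces it to vanish for all real $t$ off the finitely many zeros of $\det g(t)$. Now substitute $t=1/s$ and clear denominators to get $\Tr\bigl((sD-H)(s^2\mathbf{1}-2sD+H)^{-1}\bigr)=0$. A kernel analysis (using that $N\ge 0$ forces $\ker H\subset\ker D\cap\ker N$) reduces to the block where $H$ is invertible, and letting $s\to 0$ gives $\Tr(-H_1 H_1^{-1})=-\dim(\ker H)^\perp=0$, hence $H=0$ and so $D=N=0$. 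In other words, the information you were trying to squeeze out of the orthogonal components and higher Taylor coefficients at $t=0$ is already contained in the $\xi$-component once you look at $t\to\infty$ instead.
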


The above corollary have the following interesting application to Riemannian foliations of the Euclidean space.
In \cite[page 450]{Murphy} the author wrote

{\it... it is easy to construct non-trivial examples of regular complex Riemannian foliations in $\mathbb{C}^n$ of all codimensions. (sic) \rm}

Indeed, the totally geodesic foliation given by the family of parallel affine subspaces $\{ \mathbf{V} + p \}$, $p \in \mathbf{V}^{\perp}$
to a fixed vector subspace $\mathbf{V} \subset \mathbb{C}^n$ give such examples.
The following theorem shows that they are (even locally) the unique examples.

\begin{teor}\label{MinimalRiemannianFoliation} Let $\mathcal{F}$ be a Riemannian foliation of an open subset $U$
of $\mathbb{R}^n$ with minimal leaves i.e. any leave of $\mathcal{F}$ is a minimal submanifold of $\mathbb{R}^n$.
Then $\mathcal{F}$  is totally geodesic. More precisely, for each $p \in U$ there is a neighborhood $G$ of $p$
such that the leaves of the restriction $\mathcal{F}|_{G}$ are open subsets of a foliation of $\mathbb{R}^n$ by parallel affine subspaces.
In particular, any complex Riemannian foliation of an open subset of $\mathbb{C}^n$ is totally geodesic.
\end{teor}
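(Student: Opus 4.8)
The plan is to show that, locally, the leaves of $\mathcal{F}$ are exactly the parallel (offset) manifolds of a single leaf, and then to invoke Corollary \ref{offsets} to force these offsets to be flat and parallel-translated. Fix $p \in U$ and let $L$ be the leaf through $p$. Since $\mathcal{F}$ is a Riemannian foliation, on a neighborhood $G$ of $p$ it is realized by a Riemannian submersion $\pi : G \to B$ onto a transverse manifold $B$ whose fibers are the connected pieces of the leaves; in particular the leaves are locally equidistant, and the normal (horizontal) geodesics emanating from $L$ meet every nearby leaf orthogonally.

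First I would realize the nearby leaves as Euclidean offsets of $L$. Fix a unit normal vector $v_0 \in \nu_p(L)$ and consider the straight line $\gamma(t) = p + t v_0$, which is a horizontal geodesic of the submersion. For each $q \in L$ let $\xi(q) \in \nu_q(L)$ be the horizontal lift of $\pi_* v_0$ at $q$; because horizontal lifts in a Riemannian submersion preserve length, $\xi$ is a unit normal field along $L$, so $\| \xi \| \equiv 1$ is constant. Since the geodesics of $\R^n$ are straight lines, the horizontal geodesic through $q$ with initial velocity $\xi(q)$ is $s \mapsto q + s\,\xi(q)$, and at $s = t$ it lands in the leaf $L_t$ through $\gamma(t)$. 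Hence for all small $t$ the leaf $L_t$ is precisely the offset $L_{t\xi} = L + t\xi = \{\, q + t\,\xi(q) : q \in L \,\}$.

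Because every $L_t$ is a leaf of $\mathcal{F}$, it is minimal by hypothesis; thus the offsets $L_{t\xi}$ are minimal for small $t$ while $\xi$ has constant length. Corollary \ref{offsets} then gives that $\xi$ is constant in $\R^n$, i.e. it is normal parallel and $A_\xi \equiv 0$. Letting $v_0$ range over all unit normals at $p$, each choice produces a constant field $\xi$ with $\xi(p) = v_0$, so $A_{v_0} = 0$ for every $v_0 \in \nu_p(L)$; as $p$ is arbitrary, the second fundamental form of $L$ vanishes identically and $L$ is an open part of an affine subspace $\mathbf{V} + p$ with $\mathbf{V} = T_p L$. Moreover, each $\xi$ being a fixed vector means the nearby leaf $L_t$ is the translate $L + t v_0$, so the leaves of $\mathcal{F}|_{G}$ are open subsets of the foliation of $\R^n$ by the parallel affine subspaces $\mathbf{V} + w$, $w \in \mathbf{V}^{\perp}$; this is exactly total geodesy. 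For the complex statement, a complex submanifold of $\mathbb{C}^n$ is automatically minimal (a classical fact; e.g. it is calibrated by the K\"ahler form), so a complex Riemannian foliation has minimal leaves and the conclusion follows from the case just proved.

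I expect the delicate point to be the identification $L_t = L_{t\xi}$, that is, verifying that the Riemannian-foliation structure really forces the nearby leaves to be genuine Euclidean offsets of $L$ along a normal field of constant length, so that Corollary \ref{offsets} is applicable. This rests on two facts about the local Riemannian submersion: horizontal lifts are length preserving, and horizontal geodesics in $\R^n$ are straight lines. One must also keep $t$ small enough that no focal points of $L$ are met, so that the map $q \mapsto q + t\,\xi(q)$ is a diffeomorphism of $L$ onto $L_t$ and $L_{t\xi}$ is an embedded submanifold.
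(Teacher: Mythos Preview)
Your proof is correct and follows essentially the same approach as the paper's: realize nearby leaves of the Riemannian foliation as Euclidean offsets $L_{t\xi}$ of a fixed leaf $L$ along a normal field of constant length, apply Corollary~\ref{offsets} to conclude $\xi$ is constant in $\mathbb{R}^n$, and then vary the normal direction to force $L$ to be totally geodesic. Your version is in fact more careful about the justification of the offset identification via the local Riemannian submersion, whereas the paper simply asserts that the Riemannian-foliation hypothesis yields such a $\xi$; the complex statement is handled identically in both.
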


In section \ref{general} we give general results and discuss some interesting examples about (non necessarily ruled) minimal helices and its intrinsic geometry.\\

Finally we give the following generalization of a result in \cite{GPR}.

\begin{teor}\label{meancurvature} A helix hypersurface  $M^{n} \subset \R^{n+1}$ with constant mean curvature is either a cylinder $M = \R \times N \subset \R \times \R^{n} = \R^{n+1}$ over a hypersurface $N \subset \R^{n}$  with constant mean curvature or an open subset of a hyperplane i.e. $M$ is a totally geodesic hypersurface of $\R^{n+1}$.
\end{teor}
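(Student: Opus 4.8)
The plan is to extract from the helix condition a rigid form for the shape operator, and then integrate the Riccati equation it satisfies along the rulings, using constant mean curvature at the very end. Fix a local unit normal $\nu$, let $A$ denote the shape operator with respect to $\nu$, write $\overline{\nabla}$ for the connection of $\R^{n+1}$ and $\nabla$ for the induced connection, and decompose the fixed (constant) vector as $\d = aT + c\nu$, where $T = \d^{\top}/\|\d^{\top}\|$, $a = \|\d^{\top}\| = \|\d\|\cos\theta$ and $c = \langle \d,\nu\rangle = \pm\|\d\|\sin\theta$. Since $M$ is a helix both $a$ and $c$ are constant on $M$. Differentiating $0 = \overline{\nabla}_X\d$ and separating tangential and normal parts through the Gauss and Weingarten formulas, I expect to obtain, whenever $\theta\neq \tfrac\pi2$, the two structural identities $AT = 0$ and $\nabla_X T = \tfrac{c}{a}\,AX$ for every tangent $X$. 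In particular $T$ is a principal direction with vanishing principal curvature and $\nabla_T T = \tfrac{c}{a}AT = 0$, so $\overline{\nabla}_T T = 0$: the integral lines of $T$ are straight lines and the helix hypersurface is automatically ruled, which is exactly the situation where the slice method of the paper applies.

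Next I would dispose of the two degenerate angles. If $\theta = \tfrac\pi2$ then $a=0$ and $\d = c\nu$, so $\nu = \d/c$ is a constant vector and $M$ is an open part of a hyperplane, i.e. totally geodesic. If $\theta = 0$ then $c=0$ and $\d = \|\d\|\,T$, so $T = \d/\|\d\|$ is itself a constant vector field tangent to $M$; the translation lines $p + s\d$ lie in $M$, and $M$ is the cylinder $\R\d \times N$ over the slice $N = M\cap H$. Computing the principal curvatures of a cylinder gives $\Tr A = nH$ with the axis contributing a zero, so that $nH_M = (n-1)H_N$; hence the constant mean curvature of $M$ forces $N$ to have constant mean curvature, yielding the cylinder alternative.

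The heart of the argument is the remaining case $\theta\in(0,\tfrac\pi2)$, where $\lambda := c/a \neq 0$. Here I would feed the structural identities into the Codazzi equation $(\nabla_T A)X = (\nabla_X A)T$; using $AT=0$ and $\nabla_X T = \lambda AX$ this should collapse to the operator Riccati equation $\nabla_T A = -\lambda A^2$ along the rulings. Because each ruling is a straight line and $\overline{\nabla}_T\nu = -AT = 0$, the normal $\nu$ is constant along it, so all tangent spaces along a ruling coincide and parallel transport is trivial; writing $A(s)$ for the shape operator in a parallel frame along the arclength-parametrized ruling, the Riccati equation integrates on its invertible part as $A(s) = \bigl(A(0)^{-1} + \lambda s\,I\bigr)^{-1}$, the kernel direction $T$ persisting as a zero eigenvalue. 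Consequently every principal curvature evolves as $\kappa_i(s) = \kappa_i(0)/\bigl(1+\lambda\kappa_i(0)\,s\bigr)$ with fixed eigendirections.

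Finally I would exploit constant mean curvature through a rational-function argument. Along the ruling
\[
 nH(s) = \Tr A(s) = \sum_i \frac{\kappa_i(0)}{1+\lambda\kappa_i(0)\,s},
\]
a rational function of $s$ in which each nonzero $\kappa_i(0)$ produces a simple pole at $s = -1/(\lambda\kappa_i(0))$ with residue $1/\lambda$. Since all residues share the sign of $1/\lambda$, coinciding poles only reinforce one another and no cancellation is possible. As $H$ is constant, this rational function is constant on an interval, hence identically constant and therefore pole-free, forcing every $\kappa_i(0)=0$. Thus $A\equiv 0$, $M$ is totally geodesic, and it is an open part of a hyperplane; note that for $H=0$ this recovers Theorem \ref{maintheorem} in the hypersurface case. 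I expect the main obstacle to be step three: deriving the Riccati equation with the correct sign and, above all, justifying that parallel transport along the straight rulings simultaneously diagonalizes $A(s)$ with constant eigendirections, so that the passage from the matrix equation to the scalar evolution of the principal curvatures is rigorous and the kernel spanned by $T$ is genuinely preserved.
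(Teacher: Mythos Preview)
Your argument is correct and complete; the structural identities, the Riccati equation $\nabla_T A = -\lambda A^2$, the integration in a constant frame along the straight ruling, and the residue argument all go through as you describe. The point you flag as the main obstacle is not one: since $AT=0$ we have $\overline{\nabla}_T E = \nabla_T E$ for any tangent $E$, so the ambient-constant frame is intrinsically parallel, and once $A(0)$ is diagonalized the matrix ODE $A' = -\lambda A^2$ has a diagonal solution with the same eigenvectors, which by uniqueness is the only one.

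The paper proves this theorem by an entirely different route. It observes that the Gauss map of a constant-mean-curvature hypersurface is harmonic (Ruh--Vilms), and that for $\theta\notin\{0,\tfrac{\pi}{2}\}$ its image is forced to lie in a small (i.e.\ totally umbilical, non--totally geodesic) sphere inside $S^n$, namely the locus of unit vectors making constant angle with $\d$. Sampson's maximum principle for harmonic maps then says a harmonic map whose image lies in a hypersurface with definite shape operator is constant, so the Gauss map is constant and $M$ is an open piece of a hyperplane. Your approach is more elementary and self-contained: it avoids both the Ruh--Vilms theorem and Sampson's maximum principle, and stays within the elementary tube/Riccati analysis already used in the paper for Theorem~\ref{maintheorem} and Lemma~\ref{LA}. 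The paper's approach, on the other hand, is shorter once those theorems are granted and makes transparent why the argument fails in higher codimension (the analogous locus in the Grassmannian no longer has definite shape operator), a point the authors discuss explicitly.
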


The above theorem is a special case of \cite[Theorem 15]{GPR2} where a similar result valid for hypersurfaces of products $\mathbb{R} \times N$ is obtained by using Bochner's formula.
Instead our proof is based in Ruh-Vilms's theorem \cite{RuVi} and a maximum principle for harmonic maps due to Sampson \cite[Theorem 2]{Sam}.
We also explain why our proof can not be extended to the case of higher codimensional minimal helix submanifolds.

\section{Minimal ruled helices}

The following result proved in \cite[Theorem 3.4, page 211]{mona} is going to play a key role along this paper.
\begin{teor}\cite[Theorem 3.4, page 211]{mona}
\label{teor:eikonalvshelix}
In the above notation, the submanifold $M$ is a helix if and only if $f$ is an eikonal function of $B$, i.e.
$\| \nabla_B f \|$ is constant on $B$. Here $\nabla_B f$
is the gradient of $f$ with respect to the induced metric on $B$ from $H \subset \R^n$.
\end{teor}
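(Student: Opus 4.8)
The plan is to reduce the whole statement to the height function $h_\d$. As recalled in the introduction, $M$ is a helix with respect to the unit vector $\d$ exactly when $\|\nabla_M h_\d\|$ is constant on $M$, and under the graph parametrization one has $h_\d = f \circ \pi$. So it suffices to express $\|\nabla_M h_\d\|$ explicitly in terms of $\|\nabla_B f\|$ and to observe that the resulting relation is strictly monotone.

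To carry this out I would write $\R^n = H \times \R$ with $\d$ the unit vector spanning the $\R$-factor, so that $h_\d(x,t)=t$ and the graph map reads $\phi(p)=(p,f(p))$ for $p \in B \subset H$. Pulling back the Euclidean metric through $\phi$ gives the metric induced on $M$, which in this chart is the rank-one modification
$$ g^M = g^B + df \otimes df, $$
where $g^B$ is the metric induced on $B$ from $H$. Since $h_\d \circ \phi = f$ and $\phi$ is an isometry onto $M$, the gradient $\nabla_M h_\d$ corresponds to the gradient of $f$ taken with respect to $g^M$ (not with respect to $g^B$), and their lengths coincide.

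The key computation is therefore the inversion of $g^M$. In a $g^B$-orthonormal basis of $T_pB$ let $c$ be the coordinate vector of $\nabla_B f$, so that $a := \|\nabla_B f\|^2 = c^{\mathsf T}c$ and the matrix of $g^M$ is the rank-one perturbation $I + c\,c^{\mathsf T}$. By the Sherman--Morrison formula its inverse is $I - (1+a)^{-1} c\,c^{\mathsf T}$, and a short calculation (using that the $g^M$-gradient of $f$ is $(g^M)^{-1}$ applied to the covector $df$) yields
$$ \|\nabla_M h_\d\|^2 = \frac{a}{1+a} = \frac{\|\nabla_B f\|^2}{1 + \|\nabla_B f\|^2}. $$

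Finally, because $t \mapsto \tfrac{t}{1+t}$ is strictly increasing on $[0,\infty)$, the left-hand side is constant along $M$ if and only if $\|\nabla_B f\|$ is constant on $B$; combined with the helix criterion this gives the asserted equivalence. I expect the only genuine obstacle to be the bookkeeping in the metric change---remembering that the relevant gradient is measured in the pulled-back metric $g^M$ while $\nabla_B f$ is measured in $g^B$---but the rank-one structure of $g^M$ makes the inversion completely explicit and collapses the whole argument to the monotonicity of a single scalar function.
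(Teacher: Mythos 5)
Your proof is correct and is essentially the argument behind the cited result: the paper quotes this theorem from \cite{mona}, and the same computation appears in its Subsection \ref{subsec:intrinsic}, where the pulled-back metric $h = g + df \otimes df$ yields $\nabla_h f = \frac{1}{1+\|\nabla_g f\|^2}\,\nabla_g f$ (Proposition \ref{prop:gradients}) and hence $\|\nabla_M h_{\d}\|^2 = \|\nabla_h f\|_h^2 = \frac{\|\nabla_g f\|^2}{1+\|\nabla_g f\|^2}$, which is exactly your identity. Your Sherman--Morrison inversion is merely a coordinate repackaging of the adapted-frame computation with $E_1 = \nabla_g f/\|\nabla_g f\|$ (cf.\ Remark \ref{rema:identificaciones}), and your concluding monotonicity step matches the standard argument.
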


Let $L$ be an immersed $l-$dimensional submanifold in $\mathbb{R}^ n$, let $\eta$ be a normal vector field to $L$ of
constant length. The shape operator $A_{\eta}$ of $L$ in direction $\eta$ is given by \[ A_{\eta}(X) = - (D_{X} \eta)^{\top}\, \, ,\] where $D$ is the directional derivative of $\R^n$.
Let $L_\eta$ be the parallel submanifold given by the immersion $t_\eta(p) = p+\eta(p)$ where $p \in L$ (cf. \cite{Olmos} page 117). We also assume that $1$ is not an eigenvalue of $A_{\eta}$.

\begin{lema}
\label{lema:tangent-and-normalframes}
Let $E_i$, $1 \leq i \leq l$ be an orthonormal local frame of $TL$ such that $A_\eta(E_i)= \lambda_i E_i$, i.e.
this frame diagonalize the shape operator $A_\eta$ of $L$ in direction $\eta$.
Let $\xi_j$, $1 \leq j \leq n-l$ be a local orthonormal frame of $\nu L$ the normal bundle of $L$.
Then the corresponding tangent $X_i$ and normal $\tilde{\xi_j}$ frames of $L_\eta$ are given by

\[ \begin{cases} X_i= (1-\lambda_i) E_i + \nabla^\perp_{E_i} \eta \, ,\\  \tilde{\xi_j} = \xi_j
- \sum_{k=1}^{l} \frac{ \langle \nabla^\perp_{E_k} \eta, \xi_j \rangle}{1-\lambda_k} E_k \, . \end{cases} \]

In particular, the metric $G=(G_{rs})$ of $L_\eta$ with respect to the frame $X_i$'s is given by
\begin{equation}\label{metric} G_{rs} = (1-\lambda_r)(1-\lambda_s)\delta_{rs} + \langle \nabla^\perp_{E_r} \eta, \nabla^\perp_{E_s} \eta   \rangle .
\end{equation}
\end{lema}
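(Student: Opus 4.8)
The plan is to compute everything directly from the parametrization $t_\eta(p) = p + \eta(p)$ of $L_\eta$: I would read off the tangent frame from the differential of $t_\eta$, obtain the metric by taking inner products, and then determine the normal frame by an orthogonality ansatz. No deep idea is needed; the lemma is a bookkeeping computation whose one essential ingredient is the hypothesis that $1$ is not an eigenvalue of $A_\eta$.

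First I would produce the tangent frame. Differentiating $t_\eta$ along $E_i$ gives $X_i = d(t_\eta)(E_i) = E_i + D_{E_i}\eta$, where $D$ is the directional derivative of $\R^n$. Splitting $D_{E_i}\eta$ into its parts tangent and normal to $L$ and invoking the Weingarten relation $(D_{E_i}\eta)^{\top} = -A_\eta(E_i) = -\lambda_i E_i$ together with $(D_{E_i}\eta)^{\perp} = \nabla^\perp_{E_i}\eta$, I obtain $X_i = (1-\lambda_i) E_i + \nabla^\perp_{E_i}\eta$, which is the first claimed formula. Since $1$ is not an eigenvalue of $A_\eta$, every factor $1-\lambda_i$ is nonzero, so the tangent parts of the $X_i$ stay linearly independent and $t_\eta$ is indeed an immersion. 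The metric is then immediate: writing $G_{rs} = \langle X_r, X_s\rangle$ and expanding the two-term expression for each $X$, the two mixed inner products $\langle E_r, \nabla^\perp_{E_s}\eta\rangle$ and $\langle \nabla^\perp_{E_r}\eta, E_s\rangle$ vanish because $E_r, E_s$ are tangent while the $\nabla^\perp\eta$ are normal, and $\langle E_r, E_s\rangle = \delta_{rs}$; this leaves exactly $G_{rs} = (1-\lambda_r)(1-\lambda_s)\delta_{rs} + \langle \nabla^\perp_{E_r}\eta, \nabla^\perp_{E_s}\eta\rangle$.

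For the normal frame I would look for $\tilde{\xi_j}$ of the form $\xi_j + \sum_k a_{jk} E_k$, that is, the original normal frame of $L$ corrected by a vector tangent to $L$, and impose $\langle \tilde{\xi_j}, X_i\rangle = 0$ for all $i$. Using $\langle \xi_j, E_i\rangle = 0$, $\langle E_k, \nabla^\perp_{E_i}\eta\rangle = 0$, and $\langle E_k, E_i\rangle = \delta_{ki}$, this condition collapses to $\langle \nabla^\perp_{E_i}\eta, \xi_j\rangle + a_{ji}(1-\lambda_i) = 0$, which can be solved precisely because $1-\lambda_i \neq 0$, yielding $a_{ji} = -\langle \nabla^\perp_{E_i}\eta, \xi_j\rangle/(1-\lambda_i)$ and hence the stated expression for $\tilde{\xi_j}$.

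The only point requiring a little care is checking that the $\tilde{\xi_j}$ together with the $X_i$ span $\R^n$, so that they genuinely form tangent and normal frames of $L_\eta$. This is an easy linear-algebra verification: the transition matrix from the orthonormal basis $\{E_i, \xi_j\}$ to $\{X_i, \tilde{\xi_j}\}$ has the invertible diagonal block $\mathrm{diag}(1-\lambda_i)$ and a positive-definite Schur complement, hence nonzero determinant. Thus the main (and indeed only) obstacle is merely keeping the tangent/normal decompositions straight and tracking where $1-\lambda_i \neq 0$ is used; there is no conceptual difficulty beyond the Weingarten formula.
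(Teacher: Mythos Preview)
Your proposal is correct and follows essentially the same approach as the paper: compute $X_i$ as the pushforward $d(t_\eta)(E_i)$ using the Weingarten decomposition $D_{E_i}\eta = -A_\eta(E_i) + \nabla^\perp_{E_i}\eta$, and then check that the proposed $\tilde{\xi_j}$ are orthogonal to every $X_i$. The only differences are cosmetic: the paper simply verifies the given formula for $\tilde{\xi_j}$ by a direct inner-product computation rather than deriving it via your ansatz, and it omits the explicit metric expansion and the spanning check that you include.
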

\begin{proof}
The vectors fields $X_i$'s are tangent to $L_\eta$ because
$$X_i =(t_\eta)_*(E_i) = D_{E_i}(p + \eta(p)) = E_i + D_{E_i} \eta = E_i + \nabla^\perp_{E_i} \eta - A_\eta(E_i) .$$
Let us see that the vectors fields $\tilde{\xi_j}$ are orthogonal to the $X_j$'s:
$$\langle \tilde{\xi_j}, X_i \rangle = \langle \nabla^\perp_{E_i} \eta, \xi_j \rangle -
\sum_{k=1}^{l} \frac{1- \lambda_i}{1-\lambda_k}   \delta_{ik} \langle \nabla^\perp_{E_k} \eta, \xi_j \rangle
=0.$$
\end{proof}

Let $M \subset \mathbb{R}^n$ be helix with respect to the direction $\d \in \mathbb{R}^n$.
Let $\pi: \mathbb{R}^n \rightarrow H$ be the projection to a normal hyperplane $H$ to $\d$.

\begin{prop}
\label{decomposition}
Let $M \subset \R^n $ be a full minimal ruled helix and let $B=\pi(M) \subset H$ be its base.
Let $L = M \cap H \subset \pi(M)=B$ be a slice. Let $\eta := T$ be the restriction of $T$ to the slice $L$.
Then either $M$ is a cylinder over a submanifold of $H$ or it is the union of the $\eta$-parallel
manifolds $L_{s\eta}$ to $L$ which are minimal submanifolds of hyperplanes parallel to $H$.
\end{prop}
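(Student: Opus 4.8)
The plan is to exploit the two structural inputs at our disposal: the rulings are straight lines of $\R^n$ (because $M$ is ruled) and the slices sit in parallel hyperplanes (because $M$ is a helix). First I would record that along each ruling $s\mapsto p+sT(p)$ the field $T$ is constant and $D_T T=0$, since the ruling is a geodesic of $\R^n$ parametrised by arc length. Next, writing a point of $M$ as $q=p+sT(p)$ with $p\in L$, I would compute the height $h_{\d}(q)=\langle p,\d\rangle+s\langle T(p),\d\rangle = s\,\|\d^{\top}\|$ up to an additive constant, using that $L\subset H$ and $\langle T,\d\rangle=\|\d^{\top}\|$. Hence, for $s$ small enough that $L_{s\eta}$ is immersed, the parallel manifold $L_{s\eta}$ is exactly the slice $M\cap H_s$, where $H_s$ is the affine hyperplane $\{\,h_{\d}=s\|\d^{\top}\|\,\}$ parallel to $H$. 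This already yields the second clause except for the minimality assertion.

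The heart of the argument is a comparison of mean curvature vectors. Applying Lemma \ref{lema:tangent-and-normalframes} with $\eta$ replaced by $s\eta$, I would take the frame $X_i$ of $L_{s\eta}$ and observe that $\langle X_i,T\rangle=0$: indeed $E_i\perp T$, while $\langle \nabla^{\perp}_{E_i}\eta,\eta\rangle=\frac{1}{2}E_i\|\eta\|^2=0$ because $\eta=T$ has unit length. Thus the tangent space of $M$ splits orthogonally as $T_qM=\R\,T\oplus T_qL_{s\eta}$, and correspondingly the normal space of the slice in $\R^n$ is $\R\,T\oplus \nu_q M$. Tracing the ambient second fundamental form over an orthonormal basis $\{w_a\}$ of $T_qL_{s\eta}$ and adjoining the ruling direction (which contributes nothing, as $D_TT=0$), I would obtain
\[ \mathbf{H}_{L_{s\eta}} \;=\; \mathbf{H}_M \;+\; \Big(\sum_a \langle D_{w_a}w_a, T\rangle\Big)\,T, \]
where $\mathbf{H}_{L_{s\eta}}$ and $\mathbf{H}_M$ are the mean curvature vectors of the slice and of $M$ in $\R^n$.

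To finish I would feed in the two remaining inputs. Since $M$ is minimal, $\mathbf{H}_M=0$, so $\mathbf{H}_{L_{s\eta}}$ is a multiple of $T$. On the other hand $L_{s\eta}\subset H_s$ and an affine hyperplane is totally geodesic, so the second fundamental form of the slice has no component along $\d$; hence $\langle \mathbf{H}_{L_{s\eta}},\d\rangle=0$. As $\langle T,\d\rangle=\|\d^{\top}\|\neq0$ for a ruled helix (the ruling direction $T=\d^{\top}/\|\d^{\top}\|$ exists precisely because $\d^{\top}\neq0$), the multiple must vanish and $\mathbf{H}_{L_{s\eta}}=0$; since $H_s$ is totally geodesic this says $L_{s\eta}$ is minimal in $H_s$. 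The dichotomy is then accounted for by the degenerate case: if $\eta=T$ is parallel along $L$, i.e. $A_{\eta}\equiv0$ and $\nabla^{\perp}\eta\equiv0$, then $X_i=E_i$, the offsets $L_{s\eta}$ are mere translates of $L$, and $M=L+\R\,T$ is a cylinder over $L\subset H$; otherwise the offsets genuinely vary and we land in the second alternative with minimal slices.

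I expect the main obstacle to be the normal-bundle bookkeeping in the mean-curvature comparison: one must verify that the only normal direction of the slice not already normal to $M$ is $\R\,T$ (which rests on the orthogonal splitting $T_qM=\R\,T\oplus T_qL_{s\eta}$ established above) and that the ruling direction drops out of the trace. Everything else follows directly from minimality together with the elementary observation that $\langle T,\d\rangle$ is a nonzero constant, which is exactly what converts the vanishing of $\mathbf{H}_M$ into the minimality of every slice.
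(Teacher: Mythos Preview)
Your argument for the minimality of the slices is correct and is genuinely different from the paper's route. The paper's proof is a one-liner: it assumes $\theta\neq 0$, notes (as in the introduction) that the slices of $M$ coincide with the parallel manifolds $L_{s\eta}$, and then simply invokes \cite[Theorem~7.1, p.~208]{kodai} to conclude that each $L_{s\eta}$ is minimal. You instead give a self-contained argument: from the orthogonal splitting $T_qM=\R\,T\oplus T_qL_{s\eta}$ (established via $\langle X_i,T\rangle=0$, using $\|\eta\|\equiv 1$) and $D_TT=0$ you obtain the identity $\mathbf{H}_{L_{s\eta}}=\mathbf{H}_M+(\text{scalar})\,T$; minimality of $M$ forces $\mathbf{H}_{L_{s\eta}}\in\R\,T$, and then the totally geodesic inclusion $L_{s\eta}\subset H_s$ kills the $\d$-component, hence the scalar, since $\langle T,\d\rangle=\|\d^{\top}\|\neq 0$. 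What your approach buys is an explicit mechanism (the ruling drops out of the trace; the only surviving normal direction is $T$, which is transverse to $\d$) in place of an external citation; what the paper's approach buys is brevity.

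One small misalignment: your final paragraph frames the dichotomy as ``$\eta$ parallel along $L$'' versus not. In the proposition (and in the paper's proof) the dichotomy is simply $\theta=0$ versus $\theta\neq 0$; ``cylinder over a submanifold of $H$'' means $\d$ is tangent, i.e.\ $M=B\times\R\,\d$. Your condition ``$A_\eta\equiv 0$ and $\nabla^{\perp}\eta\equiv 0$'' is a different statement (indeed, showing it forces $\theta=0$ under fullness is precisely the content of Theorem~\ref{maintheorem}, proved later). This does not damage your proof, however, because your minimality argument applies uniformly whenever $T$ is defined, so the second alternative of the proposition is established in any case; just replace the closing case split by ``if $\theta=0$ then $M$ is a cylinder over $B\subset H$; otherwise the preceding computation gives minimal slices.''
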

\begin{proof}
Let us assume that $M$ is not a cylinder over a submanifold of $H$. That is to say the helix
constant angle $\theta$ between its tangent spaces and $\d$ is not zero.
We already explained, in the introduction, that the $\eta$-parallel
manifolds $L_{s\eta}$ to $L$ are the slices of $M$.
So by \cite[Theorem 7.1, page 208]{kodai} we get that the $\eta$-parallel
manifolds $L_{s\eta}$ are minimal submanifolds.
\end{proof}

\begin{lema}
\label{traceofshape}
Under the above assumptions, the trace of the shape operator $A_\eta^s$ of $L_{s\eta}$
in direction $\eta$ is given by
\[ \Tr(A_\eta^s)= \Tr((\Di - s \Di^2 -s\No)[\mathbf{1} - 2s \Di + s^2 (\Di^2 + \No)]^{-1}),\]
where $\Di, \No$ are the matrices:  $\Di_{ij}= \lambda_i \delta_{ij}$ and
$\No_{ij}=\langle \nabla^\perp_{E_i} \eta, \nabla^\perp_{E_j} \eta  \rangle$.
\end{lema}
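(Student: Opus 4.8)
The plan is to extract everything from Lemma \ref{lema:tangent-and-normalframes}, applied with the normal field $s\eta$ in place of $\eta$, and then to assemble the trace working in the non-orthonormal frame $X_i$. Since the shape operator of $L$ in the direction $s\eta$ is $sA_\eta$, it is diagonalized by the same frame $E_i$, now with eigenvalues $s\lambda_i$, and $\nabla^\perp_{E_i}(s\eta) = s\nabla^\perp_{E_i}\eta$. Hence Lemma \ref{lema:tangent-and-normalframes} gives the tangent frame of $L_{s\eta}$ as $X_i = (1 - s\lambda_i)E_i + s\,\nabla^\perp_{E_i}\eta$, valid for small $s$ (so that $s\lambda_i \neq 1$), and formula \eqref{metric} gives $G_{ij} = (1-s\lambda_i)(1-s\lambda_j)\delta_{ij} + s^2\langle \nabla^\perp_{E_i}\eta, \nabla^\perp_{E_j}\eta\rangle$. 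Collecting the diagonal part $(1-s\lambda_i)^2 = 1 - 2s\lambda_i + s^2\lambda_i^2$ and the term $s^2\No_{ij}$, this is exactly the matrix $G = \mathbf{1} - 2s\Di + s^2(\Di^2 + \No)$ that is being inverted in the statement.

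Next I would verify that $\eta$ is genuinely normal to $L_{s\eta}$, so that $A_\eta^s$ is an honest shape operator. This is where the constancy of $\|\eta\| = \|T\| = 1$ enters: since $\eta \perp E_i$ and $\langle \eta, \nabla^\perp_{E_i}\eta\rangle = \tfrac12 E_i\|\eta\|^2 = 0$, one gets $\langle \eta, X_i\rangle = (1-s\lambda_i)\langle \eta, E_i\rangle + s\langle \eta, \nabla^\perp_{E_i}\eta\rangle = 0$ for every $i$.

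I would then compute the second fundamental form. The normal field $\eta$ on $L_{s\eta}$ satisfies $\eta(p + s\eta(p)) = \eta(p)$, so differentiating along $X_i = (t_{s\eta})_*E_i$ reduces the ambient derivative to a derivative on $L$, namely $D_{X_i}\eta = D_{E_i}\eta = \nabla^\perp_{E_i}\eta - \lambda_i E_i$. With $A_\eta^s(X) = -(D_X\eta)^\top$, the matrix of the second fundamental form relative to the frame $X_i$ is $\mathrm{II}_{ij} = \langle A_\eta^s(X_i), X_j\rangle = -\langle \nabla^\perp_{E_i}\eta - \lambda_i E_i,\ (1-s\lambda_j)E_j + s\,\nabla^\perp_{E_j}\eta\rangle$. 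Expanding and using orthogonality of the tangent and normal parts, the cross terms vanish and only $s\No_{ij}$ and $(1-s\lambda_i)\lambda_i\delta_{ij}$ survive, yielding $\mathrm{II} = \Di - s\Di^2 - s\No$.

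Finally, the key bookkeeping point is that in a non-orthonormal frame the trace of an endomorphism is recovered as $\Tr(A_\eta^s) = \Tr(G^{-1}\,\mathrm{II})$, since $\mathrm{II}$ records the inner products against the $X_i$ and must be corrected by the inverse Gram matrix. Cyclic invariance of the trace then rewrites this as $\Tr(\mathrm{II}\,G^{-1})$, which is precisely $\Tr\big((\Di - s\Di^2 - s\No)[\mathbf{1} - 2s\Di + s^2(\Di^2+\No)]^{-1}\big)$, as claimed. I expect the main obstacle to be not any single computation but the careful handling of this non-orthonormal frame, i.e.\ remembering the $G^{-1}$ factor and correctly identifying $D_{X_i}\eta$ with $D_{E_i}\eta$; everything else is linear-algebraic assembly of data already furnished by Lemma \ref{lema:tangent-and-normalframes}.
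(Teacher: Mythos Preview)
Your argument is correct and follows essentially the same route as the paper: compute the Gram matrix $G$ of the pushed-forward frame $X_i$ via Lemma~\ref{lema:tangent-and-normalframes}, compute the bilinear form $\mathrm{II}_{ij}=\langle A_\eta^s X_i,X_j\rangle$, and take $\Tr(\mathrm{II}\,G^{-1})$. The only cosmetic difference is that the paper evaluates $\mathrm{II}_{ij}$ by flipping $-\langle D_{E_i}\eta,X_j\rangle$ to $\langle \eta,D_{E_i}X_j\rangle$ and then expanding $D_{E_i}X_j$, whereas you expand $D_{E_i}\eta=\nabla^\perp_{E_i}\eta-\lambda_iE_i$ directly; both land on $\Di-s\Di^2-s\No$.
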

\begin{proof}
As explained in the introduction $\eta=T$ is orthogonal to the slices $L_{s\eta}$.
We will denote by $A_\eta$ the shape operator of $L$ in direction $\eta$.
Let $E_1, \cdots, E_{\mathrm{dim}(L)}$ be the frame of $L$ and let $X_1^s, \cdots, X^s_{\mathrm{dim}(L)}$ be the frame of $L_{s\eta}$ introduced in Lemma \ref{lema:tangent-and-normalframes}. The following computation follows the same ideas as in the classical ``tube formula" (cf. \cite[page 121]{Olmos}):
$$
\begin{aligned}
\langle A_\eta^s(X_i^s), X_j^s \rangle &=- \langle D_{E_i} \eta, X_j^s \rangle =  \langle  \eta, D_{E_i} X_j^s \rangle \\
&= \langle  \eta, D_{E_i} ((1-s \lambda_j) E_j + s \nabla^\perp_{E_j} \eta) \rangle \\
&= (1-s \lambda_j) \langle \eta , \alpha(E_i, E_j) \rangle +
s \langle \eta, \nabla^\perp_{E_i}\nabla^\perp_{E_j} \eta  \rangle\\
&= (1-s \lambda_j) \langle A_\eta(E_i), E_j \rangle -
s \langle \nabla^\perp_{E_i} \eta, \nabla^\perp_{E_j} \eta  \rangle\\
&= (1-s \lambda_j) \lambda_i \delta_{ij}
- s \langle \nabla^\perp_{E_i} \eta, \nabla^\perp_{E_j} \eta  \rangle\\
&= \lambda_i \delta_{ij} - s \lambda_i \lambda_j \delta{ij} - s \langle \nabla^\perp_{E_i} \eta, \nabla^\perp_{E_j} \eta  \rangle
\end{aligned}
$$
Therefore, we have that
$$\langle A_\eta^s(X_i^s), X_j^s \rangle = \Di_{ij} - s \Di_{ij}^2 -s\No_{ij}.$$

Now equation (\ref{metric}) in Lemma \ref{lema:tangent-and-normalframes} give us the metric $G_{ij}$ of $L_{s\eta}$ with respect to the frame $X^s_1, \cdots, X^s_{\mathrm{dim}(L)}$:
\[G_{ij}=\delta_{ij} - s \delta_{ij} (\lambda_i + \lambda_j) + s^2 \lambda_i \lambda_j \delta_{ij} +
s^2 \langle \nabla^\perp_{E_i} \eta, \nabla^\perp_{E_j} \eta  \rangle
\]
So, $$G = \mathbf{1} - 2s \Di + s^2 (\Di^2 + \No).$$
Then, we have that
$$\Tr(A_\eta^s)= \Tr((\Di - s \Di^2 -s\No)[\mathbf{1} - 2s \Di + s^2 (\Di^2 + \No)]^{-1}).$$

\end{proof}

For the proof of Theorem \ref{maintheorem} we will need the following lemma.

\begin{lema}
\label{LA}
Let $\No,\Di$ be symmetric square matrices with $\No$ positive semi-definite. Set $\H := \Di^2 +
\No$ and
let $\epsilon > 0$ be such that the matrix $\mathbf{1} - 2 s \Di + s^2 \H$
is invertible for all $s \in (0,\epsilon)$.
If
\[ \Tr\left((\Di - s \H)(\mathbf{1} - 2 s \Di + s^2 \H)^{-1}\right) = 0 \]
for all $s \in (0,\epsilon)$ then \[ \Di=\No =\H = \mathbf{0} . \]
\end{lema}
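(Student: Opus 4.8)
The plan is to recognize the hypothesis as a logarithmic derivative in disguise. Writing $G(s) := \mathbf{1} - 2s\Di + s^2\H$, I first observe that $\tfrac{\dif}{\dif s}G(s) = -2(\Di - s\H)$, so that the trace appearing in the hypothesis equals, up to the factor $-\tfrac{1}{2}$, exactly $\Tr(G(s)^{-1}\,\tfrac{\dif}{\dif s}G(s)) = \tfrac{\dif}{\dif s}\log\det G(s)$ by Jacobi's formula. Hence the assumption says precisely that $\det G(s)$ is constant on $(0,\epsilon)$. Since $\det G(s)$ is a polynomial in $s$, and a polynomial that is constant on an interval is constant everywhere, while $G(0) = \mathbf{1}$ forces the value $1$, I conclude the single algebraic identity $\det G(s) \equiv 1$.

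Next I would exploit the positive semidefiniteness of $\No$. Completing the square gives $G(s) = (\mathbf{1} - s\Di)^2 + s^2\No$, which is positive semi-definite since $\No \succeq 0$. The coefficient of the top power $s^{2n}$ in $\det G(s)$ equals $\det \H$ (it arises by selecting the $s^2\H$ term in each factor of the determinant expansion); because $\det G(s) \equiv 1$, this forces $\det \H = 0$. As $\H = \Di^2 + \No$ is a sum of two positive semi-definite matrices, every $v \in \ker \H$ satisfies $0 = \langle \H v, v\rangle = \|\Di v\|^2 + \langle \No v, v\rangle$, whence $\Di v = 0$ and $\No v = 0$. Thus $K := \ker \H$ is a common kernel of $\Di$ and $\No$.

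Finally I would run a dimension-reduction argument. Since $\Di$ and $\No$ are symmetric and vanish on $K$, they preserve the orthogonal splitting $\R^n = K \oplus K^{\perp}$, restricting on $K^{\perp}$ to operators $\Di', \No'$ for which $\H' := \Di'^2 + \No'$ is now \emph{invertible} (any $w \in K^{\perp}$ with $\H' w = 0$ lies in $\ker \H = K$, hence is $0$). Consequently $G(s)$ is block diagonal with blocks $\mathbf{1}_K$ and $G'(s) := \mathbf{1} - 2s\Di' + s^2\H'$, so that $1 \equiv \det G(s) = \det G'(s)$; but the top coefficient of $\det G'(s)$ is $\det \H' \neq 0$, making $\det G'(s)$ a nonconstant polynomial of degree $2\dim K^{\perp}$ unless $\dim K^{\perp} = 0$. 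Therefore $K = \R^n$, i.e. $\H = \mathbf{0}$, and hence $\Di = \No = \mathbf{0}$. The main obstacle, and the real crux, is the first step: spotting that the given trace is the logarithmic derivative of $\det G(s)$, so that the analytic hypothesis collapses to the algebraic identity $\det G(s) \equiv 1$; once that is in hand, the positive semidefiniteness of $\No$ does the rest through the kernel splitting.
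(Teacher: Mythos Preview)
Your proof is correct, and it takes a genuinely different route from the paper's. Both arguments share the kernel step (from $\No\succeq 0$ one gets $\ker\H\subset\ker\Di\cap\ker\No$) and the resulting block decomposition $\ker\H\oplus\ker\H^{\perp}$. The divergence is in how the contradiction on $\ker\H^{\perp}$ is reached. The paper clears denominators via the adjugate, observes that the trace identity (being polynomial divided by $\det G(s)$) extends to all but finitely many $s$, substitutes $s=1/t$, and lets $t\to 0$ to obtain $\Tr(-\H_1\H_1^{-1})=-\dim\ker\H^{\perp}=0$. You instead recognize the hypothesis as (up to $-\tfrac{1}{2}$) the logarithmic derivative of $\det G(s)$ via Jacobi's formula, so the entire analytic assumption collapses to the single algebraic identity $\det G(s)\equiv 1$; then the leading coefficient $\det\H'$ of $\det G'(s)$ on $\ker\H^{\perp}$ must vanish, forcing $\ker\H^{\perp}=0$. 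Your packaging is a bit cleaner: one polynomial identity replaces the extension-plus-limit maneuver, and the positive semidefiniteness of $\No$ enters at exactly the same place. (Your intermediate remark that $\det\H=0$ is correct but not strictly needed---the block argument already covers the case $\ker\H=0$.)
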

\it Proof. \rm The inverse $G^{-1}$ of an invertible matrix $G$ can be
computed by means of its \emph{adjoint} matrix $\adj(G)$. Namely,
\[ G^{-1} = \frac{\adj(G)}{\mathrm{det}(G)} \, .\]

Then for $s \in (0,\epsilon)$ we have
$$
\begin{array}{ccc}
 \Tr\left((\Di - s \H)(\mathbf{1} - 2 s \Di + s^2 \H)^{-1}\right) &=&
\Tr\left((\Di - s \H)\frac{\adj(\mathbf{1} - 2 s \Di + s^2 \H)}{\det \left(
\mathbf{1} - 2 s \Di + s^2 \H \right)} \right) \\
 &=&  0 \, .
\end{array}
$$
Since the polynomial $P(s) := \det \left( \mathbf{1} - 2 s \Di + s^2
\H\right)$ has a finite number of zeros we get that
\[ \Tr\left((\Di - s \H)(\mathbf{1} - 2 s \Di + s^2 \H)^{-1}\right) = 0 \]
for all real numbers $s \in \mathbb{R}$ up to the finite number of zeroes
of $P(s)$.
Changing $s = \frac{1}{t}$ we get
\begin{equation}
\label{traza}
\Tr\left((t\Di - \H)(t^2\mathbf{1} - 2 t \Di + \H)^{-1}\right) = 0
\end{equation}
for all $t \in \mathbb{R}$ up to a finite number of exceptions.\\

Let $\overrightarrow{v} \in \mathrm{ker}(\H)$ be a vector in the kernel of
$\H$ then \[ \H \overrightarrow{v} = \Di^2 \overrightarrow{v} + \No
\overrightarrow{v} = 0 \, \, .\]
So \[ \Di \overrightarrow{v} . \Di \overrightarrow{v} = - \No
\overrightarrow{v} . \overrightarrow{v} \]
hence $\Di \overrightarrow{v} = \No \overrightarrow{v} = \H
\overrightarrow{v} = 0 $ since $\No$ is positive semi-definite.
Then $\mathrm{ker}(\H) \subset \mathrm{ker}(\Di)$ and $\mathrm{ker}(\H) \subset \mathrm{ker}(\No)$.
Since $\Di$ and $\No$ are symmetric matrices they preserve $\mathrm{ker}(\H)^{\perp}$
and we get the following block decomposition with respect to the splitting $\mathrm{ker}(\H) \oplus \mathrm{ker}(\H)^{\perp}$:

\[\Di = \left(
           \begin{array}{cc}
             0 & 0 \\
             0 & \Di_1 \\
           \end{array}
         \right) \, \, , \, \, \No = \left(
           \begin{array}{cc}
             0 & 0 \\
             0 & \No_1 \\
           \end{array}
         \right) \, \, , \, \, \mbox{and} \, \, \H = \left(
           \begin{array}{cc}
             0 & 0 \\
             0 & \H_1 \\
           \end{array}
         \right) \, .
  \]

Now equation (\ref{traza}) reduce to \[ \Tr\left((t\Di_1 -
\H_1)(t^2\mathbf{1} - 2 t \Di_1 + \H_1)^{-1}\right) = 0 \, .\]
Letting  $t \rightarrow 0 $ we get   \[ \Tr\left((- \H_1)(\H_1)^{-1}\right)
= \Tr\left(- \mathbf{1} \right) = 0 \, \]
which is a contradiction unless $\H_1 = 0$. So $\H_1 = 0$ hence $\H = 0$ and also $\Di = \No = 0$ since $\mathrm{ker}(\H)^{\perp} = \{ 0\}$ . $\Box$

\subsection{Proof of Theorem \ref{maintheorem}}

\vspace{0.1cm}

Let $M$ be a ruled minimal helix submanifold of $\mathbb{R}^n$ with constant angle $\theta \neq 0$. We are going to show
that $M$ is not full, that is to say $M$ is contained in a hyperplane.\\
By Proposition \ref{decomposition}, the helix $M$ is a union of
parallel submanifolds $L_{s\eta}$, where $L$ is a slice and $\eta= T$ is a normal
vector field of $L$ of constant length.\\
By Lemma \ref{traceofshape} and since $L_{s\eta}$ is minimal for small values of $s$,
$$0=\Tr(A_\eta^s)= \Tr((\Di - s \Di^2 -s\No)[\mathbf{1} - 2s \Di + s^2 (\Di^2 + \No)]^{-1}),$$
where $\Di, \No$ are the matrices:  $\Di_{ij}= \lambda_i \delta_{ij}$ and
$\No_{ij}=\langle \nabla^\perp_{E_i} \eta, \nabla^\perp_{E_j} \eta  \rangle$.\\
Now, by Lemma \ref{LA}, $\Di=0$ and $\No=0$, that is to say the vector field $\eta$
is parallel with respect to the normal connection and its shape operator $A_\eta=0$.
Hence $\eta$ is constant in the ambient space along $L$.
This implies that $T$ is a constant vector along $M$ in the ambient space $\R^n$ hence $\d^{\top}$ is constant along $M$ in the ambient space $\R^n$.
Therefore, $\d^{\perp}$ is a constant vector field along $M$ in the ambient space $\R^n$.
Since we assumed that $\theta \neq 0$ we get that $M$ is contained in a hyperplane orthogonal to $\d^{\perp} \neq 0$ i.e. $M$ is not full. $\Box$

\subsection{Proof of Corollary \ref{offsets}}

The corollary follows by applying Lemma \ref{lema:tangent-and-normalframes}, Lemma \ref{traceofshape} and Lemma \ref{LA} to $L = M$ and $\eta = \xi$.

\section{Complex helix submanifolds: Proof of Theorem \ref{complex}.}
It is well-known that a complex submanifolds of $\mathbb{C}^n$ is also a
minimal submanifold.
We notice that Theorem \ref{complex} is not an immediate corollary of
Theorem \ref{maintheorem} since we do not assume the complex submanifold $M
\subset \mathbb{C}^m$ to be a {\bf ruled} helix.

We need the following lemma.

\begin{lema} Let $N^2 \subset \mathbb{R}^{n}$ be a minimal helix surface
(not necessarily ruled).
Then $N^2$ is a totally geodesic submanifold (hence ruled).
\end{lema}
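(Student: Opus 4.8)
The plan is to reduce the statement to the minimal-ruled case already handled by Theorem \ref{maintheorem}, so the heart of the matter is to show that a minimal helix surface $N^2$ is automatically ruled and indeed totally geodesic. First I would work with the decomposition $\d = \d^{\top} + \d^{\perp}$ along $N$ and introduce the unit tangent field $T := \d^{\top}/\|\d^{\top}\|$, which is well-defined because the helix angle $\theta$ is constant and (assuming $\theta \notin \{0,\tfrac{\pi}{2}\}$) nonzero. Since $N$ is a surface, $\{T, JT\}$ is an orthonormal frame of $TN$ at each point, where I abbreviate by $JT$ a unit tangent field orthogonal to $T$. The goal is to show the integral curves of $T$ are geodesics of $\R^n$, i.e. that $T$ is ruled, and more strongly that the second fundamental form vanishes identically.

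The key computational step is to exploit that $h_{\d}$ is an eikonal function on $N$: $\|\nabla_N h_{\d}\| = \|\d^{\top}\|$ is constant, and $\nabla_N h_{\d} = \d^{\top} = \|\d^{\top}\|\, T$. Differentiating the eikonal condition gives the standard identity that the gradient flow $T$ satisfies $\nabla^N_T T = 0$, i.e. the integral curves of $T$ are geodesics of $N$. I would then compute the Hessian of $h_{\d}$, using that $h_{\d}$ is a height (linear) function, so its ambient Hessian vanishes and hence $\mathrm{Hess}_N h_{\d}(X,Y) = \langle \alpha(X,Y), \d^{\perp}\rangle$, where $\alpha$ is the second fundamental form of $N$. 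Pairing the eikonal relation $\langle \nabla_N h_{\d}, \nabla_N h_{\d}\rangle = \mathrm{const}$ differentiated against any $X$ yields $\mathrm{Hess}_N h_{\d}(X, T) = 0$ for all $X$, which translates into $\langle \alpha(X,T), \d^{\perp}\rangle = 0$ for all tangent $X$.

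At this point the minimality of $N$ (automatic since $N$ is two-dimensional complex, or assumed as hypothesis) enters. Minimality says $\mathrm{Tr}\,\alpha = \alpha(T,T) + \alpha(JT,JT) = 0$. Combined with $\langle \alpha(T,T), \d^{\perp}\rangle = 0$ and $\langle \alpha(T,JT), \d^{\perp}\rangle = 0$ from the previous step, I would deduce that the $\d^{\perp}$-component of the entire second fundamental form is controlled, and in particular that the shape operator in the direction $\d^{\perp}$ has vanishing trace and vanishes on $T$. The strategy is then to recognize that $N$ is ruled (its $T$-curves are ambient geodesics), so Theorem \ref{maintheorem} applies and forces $\theta = 0$, contradicting $\theta \neq 0$ unless $N$ lies in a hyperplane; iterating or directly invoking Corollary \ref{offsets} with $\xi$ proportional to the constant-length field $T$ then yields $A_T \equiv 0$ and normal-parallelism of $T$, which upgrades "ruled" to "totally geodesic."

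The main obstacle, and the step I expect to require the most care, is ruling out the degenerate angle $\theta = \tfrac{\pi}{2}$ and handling the passage from "ruled" to "totally geodesic." For $\theta = \tfrac{\pi}{2}$ the field $\d^{\top}$ vanishes and $\d$ is everywhere normal, so the frame $\{T, JT\}$ breaks down; I would treat this separately by observing that $\d^{\perp}$ is then a normal field of constant length, apply Corollary \ref{offsets} to conclude $\d^{\perp}$ is constant and $A_{\d^{\perp}} \equiv 0$, and derive that $N$ degenerates into a hyperplane slice, again reducing dimension. The delicate bookkeeping is to verify that once the $T$-curves are shown to be geodesics, the eikonal identities plus minimality genuinely force \emph{all} components of $\alpha$ (not merely the $\d^{\perp}$-component) to vanish; this is where I expect to invoke Lemma \ref{LA} and Corollary \ref{offsets} to close the argument rather than a bare hands computation, since those tools are precisely designed to promote constant-length minimality into flatness of the normal field.
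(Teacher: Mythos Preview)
Your approach has a genuine gap at the crucial step. From the eikonal condition you correctly deduce $\nabla^N_T T = 0$, i.e.\ the $T$-curves are \emph{intrinsic} geodesics of $N$. But ``ruled'' means they are geodesics of $\R^n$, which requires $D_T T = 0$; the difference is $\alpha(T,T)$. Your Hessian computation only yields $\langle \alpha(T,X),\d^{\perp}\rangle = 0$, so you control just one normal component of $\alpha(T,T)$, not all of them. Hence the sentence ``recognize that $N$ is ruled (its $T$-curves are ambient geodesics)'' is not justified, and Theorem~\ref{maintheorem} cannot be invoked. Your proposed remedy --- applying Corollary~\ref{offsets} with $\xi$ proportional to $T$ --- does not make sense: $T$ is a \emph{tangent} field, while Corollary~\ref{offsets} requires a normal field whose associated parallel submanifolds are all minimal. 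Likewise, in the $\theta=\tfrac{\pi}{2}$ case you cannot apply Corollary~\ref{offsets} to $\d^{\perp}$: nothing tells you the parallel translates $N + t\d$ are minimal.

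The paper's argument is completely different and avoids all of this machinery. Since $N$ is minimal, the height function $h_{\d}$ is harmonic; since $N$ is a helix, $h_{\d}$ is eikonal. On a surface, a harmonic eikonal function forces the induced metric to be flat: the gradient lines and the level curves form two orthogonal totally geodesic foliations. Now Gauss's equation for a minimal submanifold reads $\mathrm{Ric}(X,X) = -\sum_i \|\alpha(X,E_i)\|^2$, so Ricci-flatness (in dimension two, flatness) forces $\alpha \equiv 0$. That is the whole proof: two lines, no appeal to Theorem~\ref{maintheorem}, Lemma~\ref{LA}, or Corollary~\ref{offsets}. The key idea you are missing is that in dimension two the intrinsic flatness of $N$ is enough, via Gauss, to kill the entire second fundamental form at once --- there is no need to isolate individual components or to establish ruledness as an intermediate step.
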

\it Proof. \rm Under the hypothesis the induced metric on $N^2$ is flat.
Indeed, this is obvious if the helix angle $\theta$ is zero.
If $\theta \neq 0$ then $N^2$ carries an harmonic eikonal function, hence two
perpendicular totally geodesic foliations, which implies flatness.
Now it is a well-known fact that the Gauss equation implies that a minimal
and Ricci-flat submanifold of $\mathbb{R}^n$ is totally geodesic. $\Box$ \\

\def\J{{\mathrm{J}}}
\def\T{{\mathrm{T}}}

\it Proof of Theorem \ref{complex}.\rm \hspace{.2cm}
We will show that $M^m \subset \mathbb{C}^{n}$ is a ruled helix
submanifold. Let $\d = \cos(\theta) \T + \sin(\theta) \xi $ be the
decomposition of $\d$ in its tangent and normal components. Let $\J$ be the
complex structure of $\mathbb{C}^n$ regarded as an automorphism of
$\mathbb{C}^n$. Then $M$ is also a helix with respect to the direction $\J
\d$. So both $\T$ and $\J \T$ are geodesic vector fields of $M^m$.
Let $\mathcal{T} = \mathrm{span}\{\T,\J \T\}$ be the $2$-dimensional
distribution generated by $\T$ and $\J \T$. We claim that $\mathcal{T}$ is
involutive. Indeed, by computing the bracket we have  \[ \begin{aligned} \J
[\T , \J \T] &= \J \left( \nabla_{\T} \J \T - \nabla_{\J \T} \T \right) \\
&= \J\nabla_{\T} \J \T - \J \nabla_{\J \T} \T \\
&= -\nabla_{\T}\T -  \nabla_{\J \T} \J \T \\
&= 0 - 0
\end{aligned}\]
and so $[\T , \J \T] = 0$ showing that $\mathcal{T}$ is involutive. Notice
that the leaves of $\mathcal{T}$ are complex surfaces which are helix with
respect to both $\d$ and $\J \d$. Then by the above lemma it follows that
the leaves of $\mathcal{T}$ are complex totally geodesic surfaces of
$\mathbb{C}^n$. Therefore the flow lines of both vector fields $\T$
and $\J \T$ are straight lines of $\mathbb{C}^{n}$.
So $M$ is a minimal ruled helix and we can apply Theorem \ref{maintheorem}
to get that $M$ splits as required. $\Box$

Now, we will extend Theorem \ref{complex} to the case when the isometric immersion
of a K\"ahler manifold is not necessarily a holomorphic isometric immersion.
The next statement was taken from \cite{DS} but it is a result of Dajczer and Gromoll.

\begin{teor}{\em (\cite{DG})}
\label{teo:dajczer-gromoll}
Let $M$ be a simply connected K\"ahler manifold (not necessarily complete) and let $f: M \longrightarrow \R^n$ be a minimal
isometric immersion. Then there exists a minimal isometric immersion $g: M \longrightarrow \R^n$
such that $\overline{f}: M \longrightarrow  \R^n \times \R^n = \mathbb{C}^n $ given by
$\overline{f}(p)=(\frac{f(p)}{\sqrt{2}},\frac{g(p)}{\sqrt{2}})$ is isometric and holomorphic
with respect to the complex structure $J(u,v)=(-v,u)$ on $\R^n \times \R^n$.
\end{teor}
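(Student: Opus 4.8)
The plan is to obtain $g$ as a \emph{conjugate} immersion to $f$. Writing $J_M$ for the complex structure of the K\"ahler manifold $M$ and $J$ for the target structure $J(u,v)=(-v,u)$, a direct computation with $\overline{f}=(f/\sqrt{2},g/\sqrt{2})$ shows that $\overline{f}$ is holomorphic, i.e. $d\overline{f}\circ J_M = J\circ d\overline{f}$, if and only if $dg=-df\circ J_M$. First I would observe that this single equation already forces everything else: if $dg=-df\circ J_M$ then $\langle dg(X),dg(Y)\rangle = \langle df(J_MX),df(J_MY)\rangle = \langle J_MX,J_MY\rangle = \langle X,Y\rangle$, since $f$ is isometric and $J_M$ is an isometry, so $g$ is automatically an isometric immersion inducing the same metric; consequently $\langle d\overline{f}(X),d\overline{f}(Y)\rangle = \tfrac12\langle X,Y\rangle + \tfrac12\langle X,Y\rangle = \langle X,Y\rangle$ and $\overline{f}$ is isometric. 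Thus the whole theorem reduces to producing a map $g\colon M\to\R^n$ solving $dg=-df\circ J_M$.

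Next I would treat this as an integration problem for the $\R^n$-valued $1$-form $\omega:=-df\circ J_M$. Since $M$ is simply connected, $\omega$ is exact as soon as it is closed, and a primitive $g$ (unique up to a translation of $\R^n$) is obtained by integration, componentwise, via the Poincar\'e lemma. To compute $d\omega$ I would use the Gauss formula $D_X(df(Y))=df(\nabla_XY)+\alpha(X,Y)$ together with the K\"ahler condition $\nabla J_M=0$. This gives $(\nabla_X\omega)(Y)=-\alpha(X,J_MY)$, whence, by torsion-freeness of the Levi-Civita connection and the symmetry of $\alpha$,
\[ d\omega(X,Y)=(\nabla_X\omega)(Y)-(\nabla_Y\omega)(X)=\alpha(J_MX,Y)-\alpha(J_MY,X). \]
Therefore $\omega$ is closed if and only if $\alpha(J_MX,Y)=\alpha(X,J_MY)$ for all $X,Y$, equivalently (replacing $Y$ by $J_MY$) if and only if $\alpha(J_MX,J_MY)=-\alpha(X,Y)$; that is, if and only if $f$ is pluriharmonic.

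The main obstacle is precisely this last point: one must show that a minimal isometric immersion of a K\"ahler manifold is automatically pluriharmonic. This is the genuine content of the Dajczer--Gromoll theorem and the step I expect to be hardest, since pluriharmonicity is strictly stronger than minimality for a general Riemannian source. The argument I would follow combines the Gauss equation $R(X,Y,Z,W)=\langle\alpha(X,W),\alpha(Y,Z)\rangle-\langle\alpha(X,Z),\alpha(Y,W)\rangle$ with the two special features of the K\"ahler situation: the curvature tensor is $J_M$-invariant (in particular $R(J_MX,J_MY,Z,W)=R(X,Y,Z,W)$ and $R(X,Y,J_MZ,J_MW)=R(X,Y,Z,W)$), and the immersion is minimal, $\mathrm{tr}\,\alpha=0$. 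Feeding the curvature symmetries into the Gauss equation and using the trace-free condition yields quadratic identities for $\alpha$ that force the $(1,1)$-component $\alpha(X,Y)+\alpha(J_MX,J_MY)$ to vanish; I would organize this by polarizing and contracting against a $J_M$-adapted orthonormal frame.

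Finally, once pluriharmonicity gives $d\omega=0$ and hence the primitive $g$, it remains only to record that $g$ is itself minimal and that $\overline{f}$ has the stated properties. Minimality of $g$ is automatic: from $(\nabla_X\omega)(Y)=-\alpha(X,J_MY)$ the second fundamental form of $g$ is $\alpha_g(X,Y)=-\alpha(X,J_MY)$ (symmetric, once $f$ is pluriharmonic), and its trace over a $J_M$-adapted frame vanishes because $\sum_i\alpha(E_i,J_ME_i)=0$ by the symmetry of $\alpha$ and the skew-symmetry of $J_M$. Holomorphy of $\overline{f}$ holds by the very construction $dg=-df\circ J_M$ (which also gives $dg\circ J_M=df$), and $\overline{f}$ is an immersion because it is isometric; this completes the argument.
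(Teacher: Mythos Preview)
The paper does not prove this theorem: it is quoted verbatim from Dajczer--Gromoll \cite{DG} (with the precise formulation taken from \cite{DS}) and used as a black box to deduce the corollary that follows. So there is no proof in the paper to compare your proposal against.

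That said, your outline is the standard route to the Dajczer--Gromoll theorem and is essentially correct. The reduction to integrating the $\R^n$-valued $1$-form $\omega=-df\circ J_M$, the computation $d\omega(X,Y)=\alpha(J_MX,Y)-\alpha(X,J_MY)$, and the conclusion that closedness is equivalent to the ``circularity'' condition $\alpha(J_MX,J_MY)=-\alpha(X,Y)$ are all right. The one place where your sketch is thin is exactly the step you flag: showing that minimality forces circularity. Your description (``feeding the curvature symmetries into the Gauss equation and using the trace-free condition yields quadratic identities\ldots'') is the right shape, but be aware that this step is not a one-line contraction; in the original paper it is carried out by computing $\sum_{i,j}\|\alpha(e_i,e_j)+\alpha(J_Me_i,J_Me_j)\|^2$ over a $J_M$-adapted orthonormal frame and showing, via the Gauss equation together with the K\"ahler identity $R(J_MX,J_MY,Z,W)=R(X,Y,Z,W)$ and the first Bianchi identity, that this sum vanishes. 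If you intend to present a full proof rather than a sketch, that computation should be written out explicitly.
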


We are ready to give the extension of Theorem \ref{complex}.

\begin{coro}
Let $M^m$ be a simply connected K\"ahler manifold (not necessarily complete) and let $f: M \longrightarrow \R^n$
be a minimal isometric immersion.
If under this immersion $M$ is a helix submanifold then $M$ is a cylinder.
\end{coro}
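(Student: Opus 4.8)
The plan is to deduce the statement from the holomorphic case of Theorem \ref{complex} by passing through the Dajczer--Gromoll immersion of Theorem \ref{teo:dajczer-gromoll}, and then to carry the resulting splitting back to the original immersion $f$.

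First I would invoke Theorem \ref{teo:dajczer-gromoll} to obtain the companion minimal isometric immersion $g\colon M\to\R^n$ together with the holomorphic isometric immersion $\overline f=\frac{1}{\sqrt 2}(f,g)\colon M\to\R^n\times\R^n=\mathbb{C}^n$. Being holomorphic, $\overline f(M)$ is a complex and hence minimal submanifold of $\mathbb{C}^n$. The next point is to verify that $\overline f$ presents $M$ as a helix. Writing $h=\langle f(\cdot),\d\rangle$ for the height function, the introduction gives $f_*(\nabla_M h)=\d^{\top}$, so $\|\nabla_M h\|=\|\d^{\top}\|$ is constant by hypothesis. Choosing $\overline{\d}:=(\d,0)\in\mathbb{C}^n$ and using $\overline f_* v=\frac{1}{\sqrt 2}(f_* v,g_* v)$, one gets the identity $\langle\overline{\d},\overline f_* v\rangle=\frac{1}{\sqrt 2}\langle\nabla_M h,v\rangle_M$, whence the tangent component of $\overline{\d}$ equals $\overline f_*(\frac{1}{\sqrt 2}\nabla_M h)$ and has constant norm $\frac{1}{\sqrt 2}\|\d^{\top}\|$. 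Thus $M$ is a helix for $\overline f$ with respect to $\overline{\d}$, and its angle $\overline\theta$ satisfies $\cos\overline\theta=\frac{1}{\sqrt 2}\cos\theta$, so $\overline\theta\in(0,\frac{\pi}{2})$ as soon as $\theta\neq\frac{\pi}{2}$ (the degenerate value $\theta=\frac{\pi}{2}$, for which $f(M)$ merely lies in a hyperplane orthogonal to $\d$, is treated separately).

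I would then restrict $\overline f$ to the complex affine hull $W$ of $\overline f(M)$ --- the affine hull of a connected complex submanifold is a complex affine subspace --- so that $\overline f(M)$ is full in $W$ and remains a helix with respect to the orthogonal projection of $\overline{\d}$ onto $W$. Theorem \ref{complex} now applies and produces a local splitting $M=\mathbb{C}\times N$ under which $\overline f$ is an extrinsic product, the $\mathbb{C}$-factor being the complex line spanned by $\nabla_M h$ and $J_M\nabla_M h$.

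The crux, which I expect to be the main obstacle, is to transport this product structure from $\overline f$ to $f$. Since $\overline f=\frac{1}{\sqrt 2}(f,g)$ is built component-wise, its second fundamental form satisfies $\alpha^{\overline f}(X,Y)=\frac{1}{\sqrt 2}\bigl(\alpha^{f}(X,Y),\alpha^{g}(X,Y)\bigr)$, so that $\alpha^{\overline f}(X,Y)=0$ forces $\alpha^{f}(X,Y)=0$. In particular the mixed second fundamental form of $f$ relative to $M=\mathbb{C}\times N$ vanishes, and by Moore's reducibility lemma (see \cite{Olmos}) the immersion $f$ is itself an extrinsic product $f_1\times f_2\colon\mathbb{C}\times N\to\R^{2}\times\R^{n-2}$. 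Finally $f_1$ is a minimal isometric immersion of the flat factor $\mathbb{C}$, hence totally geodesic because a minimal Ricci-flat submanifold of $\R^n$ is totally geodesic (the Gauss-equation argument used in Theorem \ref{complex}); therefore $f(M)=\R^2\times f_2(N)$ is a cylinder. The delicate part is exactly this descent: the conclusion of Theorem \ref{complex} is a statement about the auxiliary holomorphic immersion $\overline f$, and one must ensure it drags the splitting of the genuine immersion $f$ along with it, which is what the component-wise relation among $\alpha^{\overline f}$, $\alpha^f$ and $\alpha^g$ guarantees.
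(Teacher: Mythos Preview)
Your proof is correct and follows essentially the same route as the paper's: pass to the Dajczer--Gromoll holomorphic immersion $\overline f$, invoke Theorem \ref{complex}, and then pull the splitting back to $f$. You supply considerably more detail than the paper does --- you verify that $\overline f(M)$ is a helix with respect to $(\d,0)$, you deal with fullness by passing to the complex affine hull, and, most importantly, you make explicit the descent from $\overline f$ to $f$ via the component-wise identity $\alpha^{\overline f}=\tfrac{1}{\sqrt 2}(\alpha^{f},\alpha^{g})$ together with Moore's lemma, whereas the paper simply asserts that the extrinsic product structure of $\overline f(M)$ forces one for $f(M)$.
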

\begin{proof}
We can assume that $f(M)$ is a helix submanifold with respect to the direction induced by
the factor $\R$ in $\R^n= \R \times \R^{n-1}$.
Let us observe that in Theorem \ref{teo:dajczer-gromoll}, we are identifying $\mathbb{C}^n$ with $\R^n \times \R^n$ with the map
$I : \R^n \times \R^n \longrightarrow \mathbb{C}^n$ given by
$$(x_1 ,  x_2 , \cdots  , x_n , y_1 ,  y_2 , \cdots  , y_n ) \mapsto (x_1+ i y_1 , x_2+ i y_2, \cdots , x_n+ i y_n ) .$$
By Theorem \ref{teo:dajczer-gromoll}, $I \circ \overline{f} : M \longrightarrow \mathbb{C}^n$ is
a holomorphic isometric immersion, i.e. $M$ is a K\"ahler submanifold of $\mathbb{C}^n$.
Therefore, Theorem \ref{complex}, implies that
$I \circ \overline{f}(M)= \left\{ I (\frac{f(p)}{\sqrt{2}},\frac{g(p)}{\sqrt{2}})  | p \in M \right\}$
is an extrinsic product
$\mathbb{C} \times N \subset \mathbb{C} \times \mathbb{C}^{n-1}$.
This proves that the original immersed submanifold $f(M)$ is an extrinsic product in $\R \times \R^{n-1}$,
i.e. it is a cylinder.\\
\end{proof}

\section{Minimal Riemannian foliations: Proof of Theorem \ref{MinimalRiemannianFoliation}.}

Let $p \in U$ and let $F_p$ be the leave of $\mathcal{F}$ through $p$.
Let $m = \mathrm{dim}(F_p)$ be the dimension of $F_p$ and let $f: \mathbb{R}^m \to \mathbb{R}^n$
be a parametrization of $F_p$ near $p$ i.e. $f(0) = p$ and $f(W)$ is an open subset of $F_p$ for a neighborhood $W$ of $0$.

Due to the fact that $\mathcal{F}$ is a Riemannian foliation we have that for
$q \in U$ near to $p$ the leave $F_q$ is obtained from $F_p$ and a normal vector
field $\xi \in \Gamma(\nu(F_p))$ of constant length. Namely, $f_{t\xi}(x) := f(x) + t\xi(x)$ is parametrization of
a neighborhood of $q \in F_{p + t \xi(p)}$ for small fixed $t$.\\
Then Corollary \ref{offsets} implies that $\xi$ is constant in $\mathbb{R}^n$ along $f(W) \subset F_p$.
That is to say $f(W)$ is contained in the affine hyperplane
\[ H_{\xi}:= \{ x \in \mathbb{R}^n :  \langle \xi(p), x \rangle = \langle \xi(p), p \rangle \} \, .\]
Since $\mathcal{F}$ is a foliation of $U$ we get that for each normal direction $\xi \in \nu_p(F_p)$
$f(W)$ is contained in the hyperplane $H_{\xi}$. So $F_p$ is near $p$ an open subset of an affine
subspace and the Riemannian foliation $\mathcal{F}$ consist of the parallel affine subspaces as we wanted to show.

Since complex submanifolds of $\mathbb{C}^n$ are minimal submanifolds the last claim of Theorem \ref{MinimalRiemannianFoliation}
follows from the first part.

\section{The geometry of the helix submanifolds}\label{general}

In this section we investigate some relations between the extrinsic geometry of the the helix
$M$ and the intrinsic geometry of its base $B=\pi(M) \subset \mathbb{R}^n$.
Our analysis is based on the eikonal function of the projection method.
The notation $\alpha_B$ and $ \mathbf{H}_B$ means respectively the second
fundamental form of the submanifold $B \subset \mathbb{R}^{n-1} \subset  \mathbb{R}^n$
 and its mean curvature vector field.
The gradient  $\nabla_B f$ and the Laplacian $\Delta_B f$ of the function $f$ are computed with respect the Riemannian metric on $B$ induced by the inclusion $B \subset \mathbb{R}^n$.

\begin{teor}
\label{formulae}
Let $B$ be the base of the helix $M$ and let $f \in C^{\infty}(B)$ be the associated eikonal function.
Then $M$ is a minimal submanifold of $\mathbb{R}^n$ if and only if the following holds:
\[ \begin{cases}\mathbf{H}_B = \frac{\alpha_B(\nabla_B f, \nabla_B f)}{1 + \|\nabla_B f\|^2} \, , \\
\Delta_B f = 0  \, . \end{cases} \]
\end{teor}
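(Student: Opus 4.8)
The plan is to realize $M$ locally as the graph immersion $\phi\colon B\to H\times\R=\R^n$, $\phi(p)=(p,f(p))$, where we identify the $\R$--factor with the line spanned by the unit vector $\d$, so that $B\subset H\cong\R^{n-1}$. Fix a point of $B$ and a local frame $E_1,\dots,E_m$ of $TB$ that is orthonormal for the metric $g_B$ induced by $B\subset\R^n$; writing $f_i:=E_i(f)$, the tangent frame of $M$ is $X_i:=\phi_*E_i=E_i+f_i\,\d$, whence the induced metric of $M$ is $G_{ij}=\delta_{ij}+f_if_j$, with inverse $G^{ij}=\delta_{ij}-\frac{f_if_j}{1+\|\nabla_Bf\|^2}$ by the Sherman--Morrison formula. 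First I would describe the normal space of $M$ in $\R^n$: every vector in the space $\nu B$ of directions normal to $B$ within $H$ is automatically orthogonal to each $X_i$, and there is exactly one further normal direction, namely $\xi_0:=\d-\nabla_Bf$, with $\|\xi_0\|^2=1+\|\nabla_Bf\|^2$; these together span the full $(n-m)$--dimensional normal space of $M$. Thus $M$ is minimal if and only if $\langle \mathbf{H}_M,\nu\rangle=\sum_{ij}G^{ij}\langle D_{X_i}X_j,\nu\rangle$ vanishes both for all $\nu\in\nu B$ and for $\nu=\xi_0$, where $D$ is the flat connection of $\R^n$.

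The second step is to compute $D_{X_i}X_j$. Since $E_j$ and $f_j$ depend only on the point of $B$, the Gauss formula $D_{E_i}E_j=\nabla^B_{E_i}E_j+\alpha_B(E_i,E_j)$ and the identity $E_i(f_j)=\mathrm{Hess}_Bf(E_i,E_j)+(\nabla^B_{E_i}E_j)f$ give $D_{X_i}X_j=\nabla^B_{E_i}E_j+\alpha_B(E_i,E_j)+\big(\mathrm{Hess}_Bf(E_i,E_j)+(\nabla^B_{E_i}E_j)f\big)\,\d$. Pairing with $\nu\in\nu B$, which is orthogonal both to $\d$ and to $TB$, kills the tangential and $\d$--components and leaves $\langle D_{X_i}X_j,\nu\rangle=\langle\alpha_B(E_i,E_j),\nu\rangle$; contracting with $G^{ij}$ then yields $\langle\mathbf{H}_M,\nu\rangle=\big\langle \mathbf{H}_B-\frac{\alpha_B(\nabla_Bf,\nabla_Bf)}{1+\|\nabla_Bf\|^2},\,\nu\big\rangle$. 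As both $\mathbf{H}_B$ and $\alpha_B(\nabla_Bf,\nabla_Bf)$ lie in $\nu B$, requiring this to vanish for all such $\nu$ produces precisely the first equation of the statement. Note that the eikonal hypothesis plays no role in this case.

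Finally I would pair $D_{X_i}X_j$ with $\xi_0=\d-\nabla_Bf$. A short computation in which the $B$--connection terms cancel against each other gives $\langle D_{X_i}X_j,\xi_0\rangle=\mathrm{Hess}_Bf(E_i,E_j)$, hence $\langle\mathbf{H}_M,\xi_0\rangle=\sum_{ij}G^{ij}\mathrm{Hess}_Bf(E_i,E_j)=\Delta_Bf-\frac{\mathrm{Hess}_Bf(\nabla_Bf,\nabla_Bf)}{1+\|\nabla_Bf\|^2}$. This is where the helix condition enters, and it is the step I expect to be the crux: since $M$ is a helix, $\|\nabla_Bf\|^2$ is constant on $B$ by Theorem \ref{teor:eikonalvshelix}, so differentiating along any $X\in TB$ gives $\mathrm{Hess}_Bf(X,\nabla_Bf)=0$, and in particular $\mathrm{Hess}_Bf(\nabla_Bf,\nabla_Bf)=0$. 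The correction term therefore drops out and $\langle\mathbf{H}_M,\xi_0\rangle=\Delta_Bf$, so minimality in the $\xi_0$ direction is equivalent to $\Delta_Bf=0$. Combining the two cases shows that $\mathbf{H}_M=0$ if and only if both displayed equations hold, which completes the proof.
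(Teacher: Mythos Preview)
Your proof is correct and follows essentially the same strategy as the paper: write $M$ as the graph $\phi(p)=(p,f(p))$, split the normal bundle of $M$ as $\nu B\oplus\mathrm{span}\{\xi_0\}$, and compute the mean curvature component in each piece. The only real difference is bookkeeping: the paper fixes the adapted orthonormal frame with $E_1=\nabla_Bf/\|\nabla_Bf\|$, so that $df(E_j)$ are constants and the induced metric becomes diagonal $G=\mathrm{diag}(1+\|\nabla_Bf\|^2,1,\dots,1)$, whereas you keep a general orthonormal frame and invert $G_{ij}=\delta_{ij}+f_if_j$ via Sherman--Morrison; your explicit use of $\mathrm{Hess}_Bf(\nabla_Bf,\cdot)=0$ from the eikonal condition plays the same role as the paper's frame choice in killing the extra Hessian term.
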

\it Proof. \rm Let $\xi_1,\cdots,\xi_r \in \Gamma(\nu(B))$ be a (local) normal frame of $B \subset \R^{n-1}$.
Then the vectors $\xi_1(p),\cdots, \xi_r(p)$ are also normal to $M$ at the point $\phi(p) = (p,f(p)) \in M$.
The vector field \[ N = \frac{(\nabla_B f, -1)}{\sqrt{1 + \|\nabla_B f\|^2}} \] is normal to $M$ so $N, \xi_1, \cdots, \xi_r$ is a normal frame of $M$.\\
Let $E_1,\cdots, E_{\mathrm{dim}(B)}$ be an orthonormal local frame of $B$ with $E_1:= \frac{\nabla_B f}{\|\nabla_B f\|}$.
Then the vector fields $X_1, \cdots, X_{\mathrm{dim}(M)}$ defined by \[ X_i := (E_i, \dif f(E_i)) \in \R^{n-1} \times \R \] give us a frame of $M$.


In terms of this frame the second fundamental form
$\alpha_M$ of $M$ is given by \[ \langle \alpha_M(X_i,X_j) , \xi_k \rangle =
\langle D_{E_i}E_j + E_i(\dif f(E_j))\d , \xi_k \rangle = \langle \alpha_B(E_i,E_j),\xi_k  \rangle \]
\[ \begin{aligned} \langle \alpha_M(X_i,X_j) , N \rangle &= \langle D_{E_i}E_j + E_i(\dif f(E_j))\d , \frac{(\nabla_B f, -1)}{\sqrt{1 +\|\nabla_B f\|^2}}  \rangle \\
&= \langle \nabla_{E_i}E_j,\frac{\nabla_B f}{\sqrt{1 + \|\nabla_B f\|^2}} \rangle - \frac{E_i(\dif f(E_j))}{\sqrt{1 + \|\nabla_B f\|^2}} \\
&= \langle \nabla_{E_i}E_j,\frac{\nabla_B f}{\sqrt{1 + \|\nabla_B f\|^2}} \rangle
 \end{aligned}\]

Let $G = (G_{i j} = \langle X_i , X_j \rangle)$ be the matrix of the metric of $M$ with respect to the frame $X_1,\cdots,X_{\mathrm{dim}(M)}$. Then the matrix of the shape operators $A_N, A_{\xi}$ with respect to the frame $X_1,\cdots,X_{\mathrm{dim}(M)}$ are:
\[ A_{\xi_k} = A_k G^{-1} \, \, , \, \, A_N = R G^{-1} \]
where $(A_k)_{i,j} := \langle \alpha_M(X_i,X_j) , \xi_k \rangle = \langle \alpha_B(E_i,E_j),\xi_k  \rangle$ and $(R_{i j}) :=  \langle \alpha_M(X_i,X_j) , N \rangle$. Observe that $G$ is the diagonal matrix $G = \mathrm{diag}(1+\|\nabla_Bf\|^2, 1, \cdots,1)$ since $df(E_1)= \| \nabla_B f\|$.

Then for all $\xi_k$ we have
\[ \begin{aligned} \mathrm{trace}(A_{\xi_k}) &= \mathrm{trace}(A_k G^{-1}) = \\
&= \frac{\langle \alpha_B(E_1,E_1),\xi_k  \rangle}{1+\|\nabla_Bf\|^2} + \langle \alpha_B(E_2,E_2),\xi_k  \rangle + \cdots \\
&+ \langle \alpha_B(E_{\mathrm{dim}(B)},E_{\mathrm{dim}(B)}),\xi_k  \rangle   \\
&= \frac{\langle \alpha_B(E_1,E_1),\xi_k  \rangle}{1+\|\nabla_Bf\|^2} - \langle \alpha_B(E_1,E_1),\xi_k  \rangle + \langle \mathbf{H}_B, \xi_k \rangle \, .
\end{aligned}\]
So $\mathrm{trace}(A_{\xi_k})=0$ for all $k$ if and only if   \[ \mathbf{H}_B = \frac{\|\nabla_Bf\|^2}{1+\|\nabla_Bf\|^2 } \alpha_B(E_1,E_1) = \frac{\alpha_B(\nabla_B f, \nabla_B f)}{1 + \|\nabla_B f\|^2} \]
and we get the first identity.
We also have  \[ \begin{aligned} \mathrm{trace}(A_{N}) &= \mathrm{trace}(RG^{-1}) = \\
&= \frac{ \langle \nabla_{E_1}E_1,\frac{\nabla_B f}{\sqrt{1 + \|\nabla_B f\|^2}} \rangle}{1+\|\nabla_Bf\|^2} + \sum_{j=2}^{\mathrm{dim}(B)}\langle \nabla_{E_j}E_j,\frac{\nabla_B f}{\sqrt{1 + \|\nabla_B f\|^2}} \rangle =\\
&= \frac{ \langle \nabla_{E_1}E_1,\frac{\nabla_B f}{\sqrt{1 + \|\nabla_B f\|^2}} \rangle}{1+\|\nabla_Bf\|^2}+\frac{\langle \nabla_{E_1} \nabla_B f, E_1 \rangle}{\sqrt{1 + \|\nabla_B f\|^2}} - \frac{\Delta_B f}{\sqrt{1 + \|\nabla_B f\|^2}}\\
&= - \frac{\Delta_B f}{\sqrt{1 + \|\nabla_B f\|^2}}
\end{aligned}\]
the last equation follows from the fact that $E_1 = \frac{\nabla_B f}{\|\nabla_B f\|}$ and $\|\nabla_B f\|$ is a constant.
So $\mathrm{trace}(A_{N}) = 0$ if and only if $\Delta_B f = 0$.
$\Box$

\vspace{.5cm}

An interesting application of the above result is given in Theorem \ref{baseN} below.

\subsection{The intrinsic geometry of helix submanifolds}

\label{subsec:intrinsic}

As we recall in the introduction any helix submanifold $M$ is locally constructed with the projection
method where we used a Riemannian manifold $B:= \pi(M) \subset \mathbb{R}^{n-1} \subset \mathbb{R}^n $
called the basis.
Here we study the relations between the geometries of $M$ and $B$.\\

So if we want to construct a helix $M$ in $\mathbb{R}^n$,
we can consider a Riemannian manifold $(B, g)$ of dimension $m$
with  an immersion of $(B, g)$ in $\mathbb{R}^n$ given by
$\phi(p)=(i(p), f(p))$ where $i: B \longrightarrow \mathbb{R}^{n-1}$ is an isometric immersion
and where $f:B \longrightarrow \R$ is an non constant eikonal function on $B$.
By Theorem \ref{teor:eikonalvshelix}, $M= \phi(B)$ is a helix submanifold of $\mathbb{R}^n$
with its induced metric $H$.
Then we have an isometry between $(M,H)$ and $(B, h:= \phi^* H)$.
First, let us observe that the relation between the metrics of $(B,g)$ and $(B,  h)$
is given by
$$h(X,Y):=(\phi^* H)(X,Y) =H(\phi_* (X), \phi_* (Y)) = g(X,Y) +df(X) df(Y) .$$
{\bf So, in this subsection we will compare $(B,g)$ with $(B,  h)$ and $f:(B,g) \longrightarrow \R$
will be a non constant $C^\infty$ eikonal function.}\\
Let $E_1 = \frac{\nabla_g f}{\|\nabla_g f\|}, E_2, \cdots , E_m$ be a local frame orthonormal of $(B,g)$.
Since $h(E_1, E_1) = 1 + \|\nabla_g f\|^2$, we can consider the following orthonormal local frame
of $(B,h)$: $ \tilde{E}_1 = \frac{1}{ \sqrt{1 + \|\nabla_g f\|^2}} E_1, E_2 , \cdots , E_m$.\\
Let us observe that in the basis $E_1 = \frac{\nabla_g f}{\|\nabla_g f\|}, E_2, \cdots , E_m$, the relation
between the metrics looks like
\begin{equation}
\label{eqn:h&ginbaseE_i}
h(E_i, E_j)=
\left\{
\begin{array}{c}
g(E_i, E_j)= \delta_{ij}, \mbox{ if either } i > 1 \mbox{ or } j > 1,\\
(1 + \|\nabla_g f\|^2) g(E_1,E_1), \mbox{ if } i = j = 1.
\end{array}  \right.
\end{equation}

\begin{rem}
\label{rema:identificaciones}
\em
Under $\phi$ the local vector field $\tilde{E_1}$ is identified with $T= \d^{\top}/\|\d^{\top}\|$ the
unit tangent component of the helix direction $\d$. Indeed,
\[ \begin{aligned} \phi_*(\tilde{E_1}) &= \frac{1}{\|\nabla_g f\| \sqrt{1 + \|\nabla_g f\|^2}} \phi_*(\nabla_g f) \\
&= \frac{1}{\|\nabla_g f\| \sqrt{1 + \|\nabla_g f\|^2}} (\nabla_g f + \|\nabla_g f\|^2 \d) = T \, . \end{aligned} \]
Notice that the function $f$ regarded as a function of $M$ is given by the
height function $f(x)= \langle x , \d \rangle $ with $x \in M$. So the gradient in $M$ of $f$ is $\d^{\top}$
and the unitary projection $\eta$ of $\d^{\top}$ in $B$ is a constant multiple of the gradient of $\nabla_g f$ when we regard $f$ as a function of $B$.
\end{rem}

In the next Proposition \ref{prop:volume-forms}, we give the relation between the volume forms of the
metrics $h$ and $g$.

\begin{prop}
\label{prop:volume-forms}
Let $\omega_g$ and $\omega_h$ be the volume forms of $(B,g)$ and $(B,h)$, respectively.
Then $$\omega_h = \sqrt{1+ \|\nabla_g f\|^2}  \  \omega_g.$$
\end{prop}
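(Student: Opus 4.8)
The plan is to reduce the statement to a single determinant computation, using the orthonormal frame that was introduced just before the proposition. Recall that the two metrics are related by $h(X,Y) = g(X,Y) + \dif f(X)\, \dif f(Y)$, so $h$ is a rank-one perturbation of $g$. The cleanest way to extract the volume forms is to express $h$ in a $g$-orthonormal frame and read off the determinant of its Gram matrix, since the ratio of the volume forms of two metrics written in one common frame is the square root of the ratio of the determinants of their Gram matrices.

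First I would take the $g$-orthonormal local frame $E_1 = \nabla_g f / \|\nabla_g f\|$, $E_2, \dots, E_m$ that appears in equation (\ref{eqn:h&ginbaseE_i}). With respect to this frame the Gram matrix of $g$ is the identity, so that $\omega_g$ is the wedge of the dual coframe. From equation (\ref{eqn:h&ginbaseE_i}) the Gram matrix of $h$ in the \emph{same} frame is diagonal,
\[ (h(E_i,E_j)) = \mathrm{diag}\big(1 + \|\nabla_g f\|^2,\, 1,\, \dots,\, 1\big), \]
because $E_1$ is the only direction on which $\dif f$ is non-zero (indeed $\dif f(E_1) = \|\nabla_g f\|$ while $\dif f(E_j) = 0$ for $j > 1$). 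Its determinant is therefore $1 + \|\nabla_g f\|^2$, and since $\omega_h = \sqrt{\det (h(E_i,E_j))}\; \omega_g$, the claimed identity follows at once.

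Equivalently, and more invariantly, one may write $h(X,Y) = g(AX,Y)$ with the $g$-self-adjoint endomorphism $A(X) = X + g(\nabla_g f, X)\,\nabla_g f$. This is the identity perturbed by a rank-one operator, so $\nabla_g f$ is an eigenvector with eigenvalue $1 + \|\nabla_g f\|^2$ while every vector $g$-orthogonal to $\nabla_g f$ is fixed; hence $\det A = 1 + \|\nabla_g f\|^2$ by the matrix determinant lemma, and $\omega_h = \sqrt{\det A}\; \omega_g$. Either route is a routine linear-algebra fact, so there is essentially no obstacle to overcome; the only point requiring care is that the frame used to compare the two Gram matrices must be one and the same, which is guaranteed here by choosing it $g$-orthonormal and adapted to $\nabla_g f$.
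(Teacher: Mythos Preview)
Your argument is correct and is essentially identical to the paper's own proof: both evaluate the Gram matrices of $g$ and $h$ in the $g$-orthonormal frame $E_1 = \nabla_g f/\|\nabla_g f\|,\,E_2,\dots,E_m$, observe that the Gram matrix of $h$ is $\mathrm{diag}(1+\|\nabla_g f\|^2,1,\dots,1)$, and take the square root of its determinant. Your additional invariant reformulation via the endomorphism $A$ and the matrix determinant lemma is a nice touch but not present in the paper; the core computation is the same.
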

\begin{proof}
Let $E_1 = \frac{\nabla_g f}{\|\nabla_g f\|}, E_2, \cdots , E_m$ be the basis defined above.
The volume forms are given by
$\omega_g(E_1, \cdots , E_m)=\sqrt{\det (g(  E_i, E_j  ))} = 1$ because the basis is orthonormal with the metric $g$.
In the case of metric $h$ we have:
$\omega_h(E_1, \cdots , E_m)=\sqrt{\det ( h(  E_i, E_j  ))} = \sqrt{1+ \|\nabla_g f\|^2}$.
\end{proof}

\begin{prop}
\label{prop:gradients}
Let $\nabla_g f$ and $\nabla_h f$ be the gradients of $f$ in $(B,g)$ and $(B,h)$, respectively.
Then
\begin{equation}
\label{eqn:gradientes}
\nabla_h f = \frac{1}{1 + \|\nabla_g f\|^2 } \nabla_g f.
\end{equation}
\end{prop}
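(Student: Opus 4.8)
The plan is to verify the claimed identity directly from the defining property of the gradient. Recall that for any metric $m$ on $B$ the gradient $\nabla_m f$ is the unique vector field satisfying $m(\nabla_m f, X) = df(X)$ for every vector field $X$; in particular $g(\nabla_g f, X) = df(X)$ for all $X$. I would therefore guess $\nabla_h f$ and check that it obeys $h(\nabla_h f, X) = df(X)$.

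The natural ansatz is $\nabla_h f = c\,\nabla_g f$ for a scalar function $c$, motivated by the fact that $h$ and $g$ differ only by the rank-one term $df\otimes df$, so the $h$-gradient should still point along $\nabla_g f$. Substituting this ansatz and using $h(X,Y) = g(X,Y) + df(X)\,df(Y)$ gives $h(c\,\nabla_g f, X) = c\,[\,g(\nabla_g f, X) + df(\nabla_g f)\,df(X)\,]$. Now $g(\nabla_g f, X) = df(X)$ and $df(\nabla_g f) = g(\nabla_g f,\nabla_g f) = \|\nabla_g f\|^2$, so the right-hand side equals $c\,(1 + \|\nabla_g f\|^2)\,df(X)$. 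Requiring this to equal $df(X)$ for all $X$ forces $c = (1 + \|\nabla_g f\|^2)^{-1}$, which is exactly the stated formula.

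Alternatively, and more in keeping with the surrounding text, I would work in the adapted orthonormal frame $E_1 = \nabla_g f/\|\nabla_g f\|, E_2, \dots, E_m$ of $(B,g)$. In this frame $df(E_1) = \|\nabla_g f\|$ and $df(E_i) = 0$ for $i > 1$, while by \eqref{eqn:h&ginbaseE_i} the matrix of $h$ is diagonal, equal to $\mathrm{diag}(1+\|\nabla_g f\|^2, 1, \dots, 1)$. Inverting this matrix and applying $\nabla_h f = \sum_{i,j} h^{ij}\,df(E_i)\,E_j$ leaves only the $i=j=1$ term, giving $\nabla_h f = (1+\|\nabla_g f\|^2)^{-1}\|\nabla_g f\|\,E_1 = (1+\|\nabla_g f\|^2)^{-1}\nabla_g f$.

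There is no real obstacle here: the statement is a one-line linear-algebra computation once the rank-one structure of $h-g$ is exploited. The only point deserving a word of justification is the parallel ansatz in the first approach, namely that $\nabla_h f$ is a multiple of $\nabla_g f$; this is guaranteed because $df$ annihilates the $g$-orthogonal complement of $\nabla_g f$ and $h$ agrees with $g$ there, so both gradients must lie in the one-dimensional span of $\nabla_g f$. The second, frame-based derivation avoids even this remark by making the diagonal structure explicit.
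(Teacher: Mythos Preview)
Your proof is correct, and your second, frame-based derivation is essentially identical to the paper's argument: the paper also works in the frame $E_1 = \nabla_g f/\|\nabla_g f\|, E_2,\dots,E_m$, uses the defining relation $h(\nabla_h f, E_j)=df(E_j)=g(\nabla_g f,E_j)$ together with the diagonal form of $h$ from \eqref{eqn:h&ginbaseE_i}, and reads off the single nonzero component. Your first, coordinate-free approach via the ansatz $\nabla_h f = c\,\nabla_g f$ and the rank-one relation $h=g+df\otimes df$ is a minor repackaging of the same computation and is arguably cleaner.
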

\begin{proof}
For every $j$, we have the relation:
$$h(\nabla_h f, E_j) = df(E_j) = g(\nabla_g f, E_j) $$
and in particular we have for $j>2$:\\
$h(\nabla_h f, E_j) = g(\nabla_g f, E_j)=0$. When $j=1$:
$E_1 = \frac{\nabla_g f}{\|\nabla_g f\|}$,
$$h(\nabla_h f, E_1) =  g(\nabla_g f, E_1) = g(\nabla_g f, \frac{\nabla_g f}{\|\nabla_g f\|}) =\|\nabla_g f\| .$$
We can calculate $\nabla_h f$ as
\begin{eqnarray*}
\nabla_h f &=& \frac{1}{ 1 + \|\nabla_g f\|^2} h(\nabla_h f, E_1)E_1   =  \frac{1}{1 + \|\nabla_g f\|^2 } \nabla_g f.
\end{eqnarray*}
\end{proof}

\begin{prop}
\label{prop:conexiones}
Let $\nabla_g f$ be the gradient of $f$ in $(B,g)$. Then the Levi-Civita connection $\nabla^h$
of $(B,h)$ is given by
\begin{equation}
\label{eqn:conexiones}
\nabla^h_X Y = \nabla^g_X Y +  \frac{Hess_g f(X,Y)}{1 +\|\nabla_g f\|^2} \nabla_g f.
\end{equation}
\end{prop}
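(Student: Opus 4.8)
The plan is to exploit that the two metrics are related by the rank-one perturbation $h = g + df\otimes df$, and to prove \eqref{eqn:conexiones} by showing that its right-hand side already \emph{is} the Levi-Civita connection of $(B,h)$. Write $\tilde\nabla_X Y := \nabla^g_X Y + \frac{Hess_g f(X,Y)}{1+\|\nabla_g f\|^2}\,\nabla_g f$ for the proposed operator. By the uniqueness of the Levi-Civita connection it suffices to check that $\tilde\nabla$ is a torsion-free affine connection compatible with $h$; then $\tilde\nabla = \nabla^h$ and the claim follows. Throughout I would lean on the two elementary identities $g(\nabla_g f, Z) = df(Z)$ and $Hess_g f(X,Y) = X(df(Y)) - df(\nabla^g_X Y)$.

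First I would dispose of the torsion and the tensoriality. Since $\nabla^g$ is torsion-free and the correction term $\frac{Hess_g f(X,Y)}{1+\|\nabla_g f\|^2}\nabla_g f$ is symmetric in $X$ and $Y$ — the Hessian of a function being symmetric, as $Hess_g f(X,Y) - Hess_g f(Y,X) = [X,Y]f - [X,Y]f = 0$ — the operator $\tilde\nabla$ inherits $\tilde\nabla_X Y - \tilde\nabla_Y X = \nabla^g_X Y - \nabla^g_Y X = [X,Y]$, so it is torsion-free. Function-linearity in $X$ and the Leibniz rule in $Y$ are immediate from those of $\nabla^g$ together with the tensorial character of $Hess_g f$ in $X$ and its derivation character in $Y$.

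The main work, and the step I expect to be the only real obstacle, is verifying metric compatibility, namely $X(h(Y,Z)) = h(\tilde\nabla_X Y, Z) + h(Y, \tilde\nabla_X Z)$. I would expand $h(\tilde\nabla_X Y, Z) = g(\tilde\nabla_X Y, Z) + df(\tilde\nabla_X Y)\,df(Z)$ and compute the two pieces. Using $g(\nabla_g f, Z) = df(Z)$ gives $g(\tilde\nabla_X Y, Z) = g(\nabla^g_X Y, Z) + \frac{Hess_g f(X,Y)}{1+\|\nabla_g f\|^2}\,df(Z)$, while the crucial factor simplifies, via $df(\nabla^g_X Y) = X(df(Y)) - Hess_g f(X,Y)$ and $g(\nabla_g f,\nabla_g f) = \|\nabla_g f\|^2$, to
\[ df(\tilde\nabla_X Y) = X(df(Y)) - Hess_g f(X,Y)\Big(1 - \tfrac{\|\nabla_g f\|^2}{1+\|\nabla_g f\|^2}\Big) = X(df(Y)) - \frac{Hess_g f(X,Y)}{1+\|\nabla_g f\|^2}. \]
This is exactly where the coefficient $\tfrac{1}{1+\|\nabla_g f\|^2}$ earns its keep: the $\frac{Hess_g f(X,Y)}{1+\|\nabla_g f\|^2}df(Z)$ contribution from $g(\tilde\nabla_X Y, Z)$ is cancelled by the one hidden inside $df(\tilde\nabla_X Y)\,df(Z)$, leaving the clean expression $h(\tilde\nabla_X Y, Z) = g(\nabla^g_X Y, Z) + X(df(Y))\,df(Z)$. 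Adding the symmetric $Z$-term and invoking metric compatibility of $\nabla^g$ collapses the right-hand side to $X(g(Y,Z)) + X(df(Y))\,df(Z) + df(Y)\,X(df(Z)) = X(h(Y,Z))$, as required. An alternative route would be to feed $h = g + df\otimes df$ directly into Koszul's formula and read off the correction, but I expect the bookkeeping there to be no lighter than the verification above, so I would prefer the uniqueness argument.
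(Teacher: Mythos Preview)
Your proof is correct and takes a genuinely different route from the paper's. The paper works in the adapted orthonormal frame $E_1=\nabla_g f/\|\nabla_g f\|,\,E_2,\dots,E_m$ and plugs the relation $h(E_i,E_j)=g(E_i,E_j)+df(E_i)df(E_j)$ into Koszul's formula, then checks the identity case by case on the basis vectors (the cases $i,j>1$ versus those involving the index~$1$); it even invokes the eikonal hypothesis to dispose of the $E_1$-terms via $\nabla^g_{E_1}E_1=0$. Your argument is frame-free: you simply verify that the candidate operator is a torsion-free affine connection compatible with $h$ and appeal to uniqueness, and the cancellation you isolate in the computation of $df(\tilde\nabla_X Y)$ is exactly what drives the result. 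Two small remarks: first, the Hessian is tensorial in \emph{both} slots, so the Leibniz rule in $Y$ comes entirely from $\nabla^g$ (your phrase ``derivation character in $Y$'' is a slip, though harmless); second, your computation never uses that $\|\nabla_g f\|$ is constant, so you have in fact proved the formula for an arbitrary smooth $f$, which is slightly more than the paper states or proves. The paper's frame computation has the merit of making the special role of the $E_1$-direction explicit, but your approach is cleaner and more general.
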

\begin{proof}
Let us recall Koszul's formula:\\
\begin{eqnarray*}
 2 g(\nabla^g_X Y,Z) & = & Xg(Y,Z) - Zg(X,Y) + Yg(Z,X) \\
& - & g(X, [Y,Z]) + g(Z,[X,Y]) + g(Y,[Z,X]).
\end{eqnarray*}
To prove the relation (\ref{eqn:conexiones}), we only have to check it for $X$ and $Y$ in a local frame.
Let $E_1 = \frac{\nabla_g f}{\|\nabla_g f\|}, E_2 \cdots , E_m$ be a local frame orthonormal of $(B,g)$.
Since $h(E_1, E_1) = 1 + \|\nabla_g f\|^2$, we can consider the following orthonormal local frame
of $(B,h)$: $ \tilde{E}_1 = \frac{1}{ \sqrt{1 + \|\nabla_g f\|^2}} E_1, E_2 , \cdots , E_m$.\\
Using Koszul's formula: $i, \ j , \ k > 1$,
\begin{eqnarray*}
2g( \nabla^g_{E_j} E_i, E_1) &=& - g(E_j,[E_i, E_1] )+ g(E_1,[E_j, E_i] ) + g(E_i, [E_1, E_j] )\\
& = & - g(E_j,[E_i, E_1]) + g(E_i,[E_1, E_j]).\\
2g( \nabla^g_{E_j} E_i, E_k) &=& - g(E_j,[E_i, E_k]) + g( E_k,[E_j, E_i]) + g(E_i,[E_k, E_j]).\\
2g( \nabla^g_{E_1} E_i, E_k) &=& - g(E_1,[E_i, E_k] ) + g(E_k, [E_1, E_i] ) + g(E_i,[E_k, E_1] ),\\
& = &  g(E_k,[E_1, E_i])  + g(E_i ,[E_k, E_1] )  .
\end{eqnarray*}
A similar calculus and the properties\\
$h( E_1, [E_i, E_j]  )=0$, $h(E_j,[E_i, E_k]) = g(E_j,[E_i, E_k])$, $h(E_j,[E_i, E_1]) = g(E_j,[E_i, E_1])$
(see (\ref{eqn:h&ginbaseE_i}))
proves that:
\begin{eqnarray*}
h( \nabla^h_{E_j} E_i, E_1) &=& g( \nabla^g_{E_j} E_i, E_1)  , \\
h( \nabla^h_{E_j} E_i, E_k) &=&  g( \nabla^g_{E_j} E_i, E_k)  , \\
h( \nabla^h_{E_1} E_i, E_k) &=&  g( \nabla^g_{E_1} E_i, E_k)  .
\end{eqnarray*}
Thus we can calculate for $i, \ j > 1$,
\begin{eqnarray*}
\nabla^h_{E_j} E_i &=& h(\nabla^h_{E_j} E_i, \tilde{E}_1) \tilde{E}_1 + \sum_{k>1} h(\nabla^h_{E_j} E_i, E_k) E_k \\
 & = & \frac{1}{1 + \|\nabla_g f\|^2} h(\nabla^h_{E_j} E_i, E_1) E_1 + \sum_{k>1} h(\nabla^h_{E_j} E_i, E_k) E_k\\
 & = &  \nabla^g_{E_j} E_i -\frac{\|\nabla_g f\|^2}{1 + \|\nabla_g f\|^2}  g(\nabla^g_{E_j} E_i, E_1) E_1 .
\end{eqnarray*}
Let us analyse the last term:
\begin{eqnarray*}
-g(\nabla^g_{E_j} E_i, E_1) & = &  g( E_i, \nabla^g_{E_j} E_1 )  = \frac{1}{\|\nabla_g f\|} g( E_i, \nabla^g_{E_j} (\nabla_g  f) )\\
& = & \frac{1}{\|\nabla_g f\|} Hess_g f (E_i , E_j ).\\
\mbox{Therefore},\\
\nabla^h_{E_j} E_i &=& \nabla^g_{E_j} E_i +\frac{\|\nabla_g f\|}{1 + \|\nabla_g f\|^2} Hess_g f (E_i , E_j )  E_1 \\
 & = & \nabla^g_{E_j} E_i +\frac{ Hess_g f (E_i , E_j )}{1 + \|\nabla_g f\|^2} \nabla_g f.
\end{eqnarray*}
When $i=1$ or $j=1$, $\nabla^h_{E_j} E_i =  \nabla^g_{E_j} E_i $.
Since $f$ is eikonal in $(B,g)$ and by Proposition \ref{prop:gradients}, we deduce that
$f$ is eikonal in $(B,h)$. Therefore, $\nabla^h_{E_1} E_1 = \nabla^g_{E_1} E_1 =0 $.
Finally, other consequence is that for every $X \in TB$, $Hess_g f (E_1 , X )=0$.
\end{proof}

\begin{rema}
\em
Let us observe that Equations (\ref{eqn:gradientes}) and (\ref{eqn:conexiones}) implies that if
$\nabla_g f$ is parallel in $(B,g)$ then $\nabla_h f$ is a parallel vector field in
$(B,h)$. Also it is true that the integral lines of $\nabla_g f$ are geodesics in $(B,g)$
if and only if the integral lines of $\nabla_h f$ are geodesics in $(B,h)$, i.e.
$ \nabla^h_{\nabla_h f} \nabla_h f =0 \mbox{ if and only if } \nabla^g_{\nabla_g f} \nabla_g f =0 $.
\end{rema}

\begin{prop}
\label{prop:hessians}
Let $\nabla_g f$ and $Hess_g f$ be the gradient and the Hessian repectively, of $f$ in $(B,g)$.
Then
\begin{equation}
\label{eqn:formula-hessianos}
Hess_h f = \frac{1}{1 + \|\nabla_g f\|^2 } Hess_g f.
\end{equation}
\end{prop}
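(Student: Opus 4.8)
The plan is to compare the two Hessians through the coordinate-free formula that expresses the Hessian in terms of the Levi-Civita connection. Recall that for any metric the Hessian of $f$ may be written as $Hess_g f(X,Y) = X(Yf) - (\nabla^g_X Y)f$, and likewise $Hess_h f(X,Y) = X(Yf) - (\nabla^h_X Y)f$. Since the first-order term $X(Yf)$ depends only on the smooth structure and not on the choice of metric, it is identical in both expressions; subtracting, the entire discrepancy is concentrated in the connection term:
\[ Hess_g f(X,Y) - Hess_h f(X,Y) = \left(\nabla^h_X Y - \nabla^g_X Y\right)f. \]

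First I would insert the connection comparison from Proposition \ref{prop:conexiones}, which gives $\nabla^h_X Y - \nabla^g_X Y = \frac{Hess_g f(X,Y)}{1 + \|\nabla_g f\|^2}\,\nabla_g f$. Applying this vector field to $f$ reduces to computing $(\nabla_g f)f = df(\nabla_g f) = g(\nabla_g f,\nabla_g f) = \|\nabla_g f\|^2$, so that
\[ Hess_g f(X,Y) - Hess_h f(X,Y) = \frac{\|\nabla_g f\|^2}{1 + \|\nabla_g f\|^2}\,Hess_g f(X,Y). \]
Finally I would solve this identity for $Hess_h f$: moving the right-hand term across and factoring out $Hess_g f(X,Y)$ leaves the scalar $1 - \frac{\|\nabla_g f\|^2}{1+\|\nabla_g f\|^2} = \frac{1}{1+\|\nabla_g f\|^2}$, which is precisely the claimed factor. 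As $X,Y$ are arbitrary, this establishes Equation (\ref{eqn:formula-hessianos}).

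There is no serious obstacle in this argument: everything collapses to the already-established connection formula of Proposition \ref{prop:conexiones} together with the elementary identity $(\nabla_g f)f = \|\nabla_g f\|^2$. The only point requiring a little care is the bookkeeping of where the metric dependence enters — namely recognizing that the purely second-order piece $X(Yf)$ is metric-independent, so that the full difference between $Hess_g f$ and $Hess_h f$ arises solely from the difference of connections. One could alternatively verify the formula frame-by-frame using the orthonormal frame $E_1 = \nabla_g f/\|\nabla_g f\|, E_2,\dots,E_m$ together with the relations (\ref{eqn:h&ginbaseE_i}), distinguishing the cases $i=j=1$, $i=1<j$, and $i,j>1$; but the invariant computation above is shorter and avoids these case distinctions entirely.
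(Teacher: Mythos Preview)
Your argument is correct and is cleaner than the one in the paper. You use the coordinate-free identity $Hess\, f(X,Y) = X(Yf) - (\nabla_X Y)f$ to reduce the Hessian comparison directly to the connection comparison of Proposition~\ref{prop:conexiones}, after which a single evaluation $(\nabla_g f)f = \|\nabla_g f\|^2$ finishes the job. The paper instead takes precisely the frame-by-frame route you mention as an alternative: it works with the definition $Hess_h f(E_i,E_j) = h(\nabla^h_{E_i}\nabla_h f, E_j)$, inserts the gradient relation from Proposition~\ref{prop:gradients}, and then splits into the cases $i,j>1$ (where it unwinds back to $g(\nabla^g_{E_i}E_1,E_j)$ using identities established inside the proof of Proposition~\ref{prop:conexiones}) and $i=1$ or $j=1$ (where both Hessians vanish because $\nabla^g_{E_1}E_1 = \nabla^h_{E_1}E_1 = 0$, the eikonal condition). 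Your approach buys brevity and avoids the case split; the paper's approach is more hands-on and, as a byproduct, records explicitly that $Hess_g f(E_1,\cdot)=Hess_h f(E_1,\cdot)=0$, which is used immediately afterwards in the proof of Corollary~\ref{coro:laplacian}.
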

\begin{proof}
If $i , \ j > 1 $ we have that,
\begin{eqnarray*}
& &Hess_h f (E_i, E_j) = \\
 & = &  h(\nabla^h_{E_i} (\nabla_h f) , Ej)=  \frac{1}{1 + \|\nabla_g f\|^2 } h(\nabla^h_{E_i} (\nabla_g f) , Ej)   \\
& = & \frac{\|\nabla_g f\|}{1 + \|\nabla_g f\|^2 } h(\nabla^h_{E_i} E_1 , Ej)
 = - \frac{\|\nabla_g f\|}{1 + \|\nabla_g f\|^2 } h( E_1 , \nabla^h_{E_i} Ej)\\
&=& - \frac{\|\nabla_g f\|}{1 + \|\nabla_g f\|^2 } g( E_1 , \nabla^g_{E_i} Ej)=
\frac{\|\nabla_g f\|}{1 + \|\nabla_g f\|^2 } g( \nabla^g_{E_i} E_1 , Ej)\\
&  = & \frac{1}{1 + \|\nabla_g f\|^2  } Hess_g f (E_i, E_j).
\end{eqnarray*}
Finally,
\begin{eqnarray}
\label{eqn:h} Hess_h f (E_1, E_j) = h( \nabla^h_{E_1} (\nabla_h f), E_j) &=& 0 \\
\label{eqn:g} Hess_g f (E_1, E_j) = g( \nabla^g_{E_1} (\nabla_g f), E_j) &=& 0 ,
\end{eqnarray}
because $\nabla^h_{E_1} E_1 = 0$ , $\nabla^g_{E_1} E_1 = 0$.
The property that $f$ is eikonal both in $(B,g)$ and $(B,h)$
implies the latter two equalities.
\end{proof}

\begin{coro}
\em
\label{coro:laplacian}
\label{coro:relacion-laplacians}
The relation between the Laplacians is given by
$$ \triangle_h f = \frac{1}{1 + \|\nabla_g f\|^2 } \triangle_g f, $$
where $\triangle_h f$ and $\triangle_g f$ are the Laplacians of $f$ in
$(B,h)$ and $(B,g)$, respectively.
\end{coro}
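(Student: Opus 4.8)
The plan is to realize each Laplacian as the trace of the corresponding Hessian taken with respect to an orthonormal frame for the \emph{appropriate} metric, and then to feed in the pointwise scaling identity from Proposition \ref{prop:hessians}. The essential subtlety is that $\triangle_h f$ and $\triangle_g f$ are metric-dependent traces, so one cannot merely trace the identity $Hess_h f = \frac{1}{1+\|\nabla_g f\|^2}Hess_g f$: the two traces are computed against different orthonormal bases, and the direction $E_1$ is rescaled in passing from $g$ to $h$.

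First I would use the two frames already in play: the $g$-orthonormal frame $E_1 = \frac{\nabla_g f}{\|\nabla_g f\|}, E_2, \dots, E_m$ and the $h$-orthonormal frame $\tilde{E}_1 = \frac{1}{\sqrt{1+\|\nabla_g f\|^2}}E_1, E_2, \dots, E_m$. Expanding the Laplacians as traces of the Hessians in these frames gives
\[ \triangle_g f = Hess_g f(E_1,E_1) + \sum_{i>1} Hess_g f(E_i,E_i) \]
and
\[ \triangle_h f = Hess_h f(\tilde{E}_1,\tilde{E}_1) + \sum_{i>1} Hess_h f(E_i,E_i) . \]

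The key step, and the one that makes the argument work, is the vanishing of the two $E_1$-contributions. Because $f$ is eikonal for both metrics, the computation in the proof of Proposition \ref{prop:hessians} gives $Hess_g f(E_1,E_1)=0$ and $Hess_h f(E_1,E_1)=0$; by bilinearity the latter forces $Hess_h f(\tilde{E}_1,\tilde{E}_1) = \frac{1}{1+\|\nabla_g f\|^2}Hess_h f(E_1,E_1)=0$ as well. Hence both Laplacians collapse to their sums over $i>1$, precisely the indices for which Proposition \ref{prop:hessians} asserts $Hess_h f(E_i,E_i) = \frac{1}{1+\|\nabla_g f\|^2}Hess_g f(E_i,E_i)$.

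Finally I would factor out the common scalar $\frac{1}{1+\|\nabla_g f\|^2}$ from the surviving sum to obtain $\triangle_h f = \frac{1}{1+\|\nabla_g f\|^2}\triangle_g f$. The only genuine obstacle is the metric-dependence of the trace described above, and it is disarmed exactly by the eikonal hypothesis, which annihilates the single direction ($E_1$) along which the Hessian rescaling does not simply factor through the trace.
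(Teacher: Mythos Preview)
Your proof is correct and follows essentially the same approach as the paper's: the paper simply says to take the trace of formula~(\ref{eqn:formula-hessianos}) and apply the vanishing identities~(\ref{eqn:h}) and~(\ref{eqn:g}), and your argument is a careful unpacking of exactly that, with explicit attention to the fact that the two traces are taken against different orthonormal frames. The only difference is level of detail; the logical structure is identical.
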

\begin{proof}
It follows by taking the trace in both sides of formula (\ref{eqn:formula-hessianos})
and applying (\ref{eqn:h}) and (\ref{eqn:g}).
\end{proof}

As an application we obtain a different proof of the second part of Theorem \ref{formulae}.

Let us observe that we have applied two notations $\triangle_g f$ and $\triangle_B f$
which are the same: The Laplacian for the isometric immersion of $(B,g)$ in
$\mathbb{R}^{n-1} \subset \mathbb{R}^n$ where $B=\pi(M)$ is the projection of the helix $M$.
Moreover, the metric of the helix $M$ is $(M,H)$ wich is isometric to $(B,h)$.

\begin{coro}
Let $M$ be a helix submanifold. Let $f$ be the associated eikonal function $f:B=\pi(M) \rightarrow \R$.
If $M$ is minimal then $\triangle_B f = 0$, in particular $f$ is an isoparametric function.
\end{coro}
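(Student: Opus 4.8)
The plan is to exploit the isometry $\phi \colon (B,h) \to (M,H)$ between the base carrying the pulled-back metric and the helix, under which the eikonal function $f$ is identified with the height function $h_{\d}(x) = \langle x, \d \rangle$ restricted to $M$, as recorded in Remark \ref{rema:identificaciones}. The argument then rests on two facts: the classical observation that the restriction of a linear function to a minimal submanifold of $\R^n$ is harmonic, and the relation between the two Laplacians already established in Corollary \ref{coro:laplacian}.

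First I would recall that applying the Laplace--Beltrami operator of $(M,H)$ componentwise to the position vector of the immersion yields the mean curvature vector field of $M$ (up to the dimensional normalization constant), so that $\triangle_M h_{\d}$ is a multiple of $\langle \mathbf{H}_M, \d \rangle$. Since $M$ is minimal, $\mathbf{H}_M = 0$ and hence $\triangle_M h_{\d} = 0$. Because $\phi$ is an isometry and isometries commute with the Laplacian, this transports to $\triangle_h f = 0$. Now Corollary \ref{coro:laplacian} gives $\triangle_h f = \frac{1}{1 + \|\nabla_g f\|^2}\,\triangle_g f$; as the scalar factor is everywhere positive, $\triangle_h f = 0$ forces $\triangle_g f = \triangle_B f = 0$, which is the first assertion.

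It remains to deduce that $f$ is isoparametric on $B$, meaning that both $\|\nabla_B f\|^2$ and $\triangle_B f$ depend only on the value of $f$. Here $f$ is eikonal by hypothesis, so $\|\nabla_B f\|^2$ is a positive constant, and we have just shown $\triangle_B f$ is the constant $0$; both are trivially functions of $f$, so $f$ is isoparametric. The computation is short precisely because the analytic content is already packaged in Corollary \ref{coro:laplacian}; the only point demanding care is the identification of $f$ with $h_{\d}$ together with the transfer of harmonicity through the isometry $\phi$, both of which are immediate from Remark \ref{rema:identificaciones}.
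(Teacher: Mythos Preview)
Your proof is correct and follows essentially the same route as the paper's: identify $f$ with the height function $h_{\d}$ on $M$ via the isometry $\phi$, use minimality of $M$ to conclude $\triangle_h f = 0$, and then apply Corollary \ref{coro:laplacian} to pass from the $h$-Laplacian to the $g$-Laplacian. Your version is in fact more carefully worded than the paper's on two points: you make the identification $f \leftrightarrow h_{\d}$ explicit through Remark \ref{rema:identificaciones} (whereas the paper loosely calls $f$ itself ``a height function''), and you spell out the isoparametric conclusion, which the paper leaves implicit.
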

\begin{proof}
Since $M$ is a helix submanifold, locally $M =\{ (x, f(x))  \}$ where $f : B \longrightarrow \R$ is a height function.
It is well known that the height functions of $M$
are harmonic with the metric of $M$ because $M$ is minimal.
Therefore $\triangle_h f =0$.
Therefore by Corollary \ref{coro:relacion-laplacians},
$ \triangle_g f = \triangle_h f =0$.
So, $\triangle_B f  = \triangle_g f = 0$.
\end{proof}

\begin{rema}
\em
Let us recall that a height function on $M$, $f:M \longrightarrow \R$ given by
$f(x) = \langle x, \d \rangle$ is harmonic when
the submanifold is minimal. Here $\d$ is a unit direction in $\R^n$.
In our case of helix submanifolds, there is other way to calculate the Laplacian of a height function:\\
According to \cite[page 194]{kodai} for any helix submanifold we have the
structure equation
$$\nabla_X T = \tan(\theta) A^\xi (X)$$ with $A^\xi$ the shape operator of the immersion $M \subset \R^n$
with respect to the vector $\xi = \d^\perp/\|\d^\perp\|$.
Taking an orthonormal basis of $TM$ we can do the sum over the basis to obtain that
$$\triangle_M f = \sum_{i=1}^m \langle \nabla_{X_i} (\nabla_M f) , X_i \rangle =
\cos(\theta) \sum_{i=1}^m \langle \nabla_{X_i} T , X_i \rangle
= \sin(\theta) \langle H , \xi \rangle ,$$
where $ \cos(\theta) = \langle T, \d \rangle = \|\d^{\top}\|, \ \sin(\theta) = \langle \xi , \d  \rangle $, $\d^{\top} = \nabla_M  f $
and $T= \d^{\top} / \|\d^{\top}\| = \nabla_M f / \cos(\theta)$.
So, it is clear that if $M$ is minimal then $f$ is harmonic in $M$.
In general, it is well known the formula for Euclidean immersed submanifolds
$\triangle_M f = \langle H, \d \rangle $.
The two relations for the Laplacian are compatible because
$\d = \cos(\theta) T + \sin(\theta) \xi $.
\end{rema}

Now we are going to find a relation between the Ricci curvature $Ric_g$ of $(B,g)$ and $Ric_h$ of $(B,h)$.\\
The Riemannian tensor of curvature is given by
$$R(X,Y)Z = - \nabla_X \nabla_Y Z + \nabla_Y \nabla_X Z + \nabla_{[X,Y]} Z , $$
and the Ricci curvature
$$Ric(X,Y) = \sum_{i=1}^m \langle R(X, X_j) Y , X_j  \rangle  , $$
where $X_1, \ldots , X_m$ is an orthonormal basis of $TB$.

\begin{prop}
The Ricci curvature $Ric_h$ in direction $\nabla_h f$ is related to the Ricci curvature
$Ric_g$ in direction $\nabla_g f$ by the formula
\begin{equation}
\label{eqn:ricci-relation}
Ric_h(\nabla_h f , \nabla_h f) = \frac{1}{(1 + \|\nabla_g f\|^2)^2} Ric_g(\nabla_g f, \nabla_g f)  .
\end{equation}
\end{prop}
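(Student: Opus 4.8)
The plan is to reduce the stated identity to a single scalar comparison and then to exploit that the tensor measuring the difference between $\nabla^h$ and $\nabla^g$ annihilates the distinguished direction. Write $\rho := \|\nabla_g f\|$, which is a nonzero constant since $f$ is a non-constant eikonal function, and set $c := 1 + \rho^2$. Since $\nabla_g f = \rho E_1$ and, by Proposition \ref{prop:gradients}, $\nabla_h f = \frac{1}{c}\nabla_g f = \frac{\rho}{c}E_1 = \frac{\rho}{\sqrt{c}}\tilde{E}_1$, bilinearity of the two Ricci tensors gives
\[ Ric_g(\nabla_g f, \nabla_g f) = \rho^2\, Ric_g(E_1,E_1), \qquad Ric_h(\nabla_h f, \nabla_h f) = \frac{\rho^2}{c}\, Ric_h(\tilde{E}_1,\tilde{E}_1). \]
Thus the claim is equivalent to the single identity $Ric_h(\tilde{E}_1,\tilde{E}_1) = \frac{1}{c} Ric_g(E_1,E_1)$, recalling that $\{\tilde{E}_1, E_2, \dots, E_m\}$ is $h$-orthonormal while $\{E_1, \dots, E_m\}$ is $g$-orthonormal.

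The heart of the argument is the comparison of curvature endomorphisms $R^h(E_1,E_a)E_1$ and $R^g(E_1,E_a)E_1$ for $a>1$. By Proposition \ref{prop:conexiones} the connections differ by the symmetric tensor $S(X,Y) := \frac{Hess_g f(X,Y)}{c}\,\nabla_g f$, i.e. $\nabla^h = \nabla^g + S$. The crucial input, already recorded in the proof of Proposition \ref{prop:conexiones}, is that $Hess_g f(E_1, X) = 0$ for every $X$; by symmetry of the Hessian this forces $S(E_1,\cdot) = S(\cdot,E_1) = 0$. Together with $\nabla^g_{E_1}E_1 = \nabla^h_{E_1}E_1 = 0$, I would expand $R^h(E_1,E_a)E_1$ term by term using the paper's convention: any factor $\nabla^h_W E_1$ equals $\nabla^g_W E_1$ because $S(\cdot,E_1)=0$; the term built on $\nabla^h_{E_1}E_1$ vanishes outright; and the outermost derivative $\nabla^h_{E_1}(\cdot)$ again ignores $S$ since $S(E_1,\cdot)=0$. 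Every correction term thereby collapses, yielding $R^h(E_1,E_a)E_1 = R^g(E_1,E_a)E_1$. In particular the precise sign convention chosen for $R$ is immaterial, as it enters both sides identically.

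To assemble the trace, I would note that tensoriality gives $R^h(\tilde{E}_1, E_a)\tilde{E}_1 = \frac{1}{c}R^h(E_1,E_a)E_1 = \frac{1}{c}R^g(E_1,E_a)E_1$, and that for $a>1$ one has $df(E_a) = g(\nabla_g f, E_a) = \rho\, g(E_1,E_a) = 0$, so the metrics agree on these directions: $h(V,E_a) = g(V,E_a)$. Computing $Ric_h(\tilde{E}_1,\tilde{E}_1)$ against the $h$-orthonormal frame (the $\tilde{E}_1$-term dropping by antisymmetry of curvature) then gives
\[ Ric_h(\tilde{E}_1,\tilde{E}_1) = \frac{1}{c}\sum_{a>1} g\big(R^g(E_1,E_a)E_1, E_a\big) = \frac{1}{c}\, Ric_g(E_1,E_1), \]
where the last step uses that the $E_1$-term in $Ric_g(E_1,E_1)$ vanishes. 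Combined with the normalization of the first paragraph, this is exactly \eqref{eqn:ricci-relation}.

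I expect no deep obstacle here: the only genuine care needed is bookkeeping, namely keeping track of which frame is orthonormal for which metric and verifying that $S$ really kills $E_1$ in both of its slots. Once that vanishing is in place the curvature correction terms collapse automatically and the factors of $c$ arise solely from the rescaling $\tilde{E}_1 = c^{-1/2}E_1$ and from the proportionality of the gradients to $E_1$.
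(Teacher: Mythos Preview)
Your proof is correct and follows essentially the same route as the paper: both arguments hinge on the observation that $Hess_g f(E_1,\cdot)=0$ forces the difference between $\nabla^h$ and $\nabla^g$ to vanish whenever $E_1$ (equivalently $\nabla_g f$) occupies a slot, so that $R^h(E_1,E_a)E_1=R^g(E_1,E_a)E_1$, after which the trace comparison is just frame bookkeeping. The only cosmetic difference is that you package the connection difference as the tensor $S$ and normalize to $\tilde E_1$ first, whereas the paper keeps $\nabla_g f$ unnormalized throughout and proves $Ric^h(\nabla_g f,\nabla_g f)=Ric^g(\nabla_g f,\nabla_g f)$ before invoking the gradient relation.
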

\begin{proof}
Let $E_1= \frac{\nabla_g f}{\|\nabla_g f\|}, \ldots , E_m$ and $\tilde{E}_1 = \frac{1}{ \sqrt{1 + \|\nabla_g f\|^2}} E_1, E_2 \ldots , E_m$
be the local orthonormal frames defined in the beginning of Subsection \ref{subsec:intrinsic}.
Let us observe that for every $Y \in TB$,
\[ Hess_g f (\nabla_g f, Y) = \langle \nabla^g_{\nabla_g f} \nabla_g f, Y \rangle = 0 \]
because the integral lines of $\nabla_g f$ are geodesics of $(B,g)$.
It follows from formula (\ref{eqn:conexiones}) that for every $X \in TB$,
$$\nabla^h_X (\nabla_g f) = \nabla^g_X (\nabla_g f) , \ \  \nabla^h_{\nabla_g f} X = \nabla^g_{\nabla_g f} X . $$
We deduce by substitution that
$$ \nabla^h_{\nabla_g f} \nabla^h_Y (\nabla_g f) = \nabla^h_{\nabla_g f} \nabla^g_Y (\nabla_g f) =
\nabla^g_{\nabla_g f} \nabla^g_Y (\nabla_g f) . $$
Analogously,
$$ \nabla^h_{[\nabla_g f , Y]} (\nabla_g f) = \nabla^g_{[\nabla_g f , Y]} (\nabla_g f) .$$
Since the integral curves of $\nabla_h f$ and $\nabla_g f$ are geodesics in $(B,g)$ and $(B,h)$ respectively,
$$\nabla^h_Y \nabla^h_{\nabla_g f} (\nabla_g f) = 0 =  \nabla^g_Y \nabla^g_{\nabla_g f} (\nabla_g f) . $$
By definition,\\
$R^g(\nabla_g f,Y) \nabla_g f = - \nabla^g_{\nabla_g f} \nabla^g_Y \nabla_g f +
\nabla^g_Y \nabla_{\nabla_g f} \nabla_g f + \nabla^g_{[\nabla_g f,Y]} \nabla_g f $ and a similarly formula
for $R^h$. Then
$$ R^h(\nabla_g f,Y) \nabla_g f = R^g(\nabla_g f,Y) \nabla_g f  .$$
Therefore,
$$
\begin{array}{ccc}
Ric^h(\nabla_g f,\nabla_g f) &=& h( R^h(\nabla_g f, \tilde{E}_1) \nabla_g f , \tilde{E}_1 )  \\
&+& \sum_{i=2}^m h( R^h(\nabla_g f, E_j) \nabla_g f , E_j )\\
&=& \sum_{i=2}^m h( R^h(\nabla_g f, E_j) \nabla_g f , E_j ) \\
&=& \sum_{i=2}^m g( R^g(\nabla_g f, E_j) \nabla_g f , E_j ) = Ric^g(\nabla_g f,\nabla_g f) .\\
\end{array}
$$
To obtain formula (\ref{eqn:ricci-relation}), we have to use equation (\ref{eqn:gradientes})
which is the relation between the gradients $\nabla_g f$ and $\nabla_h f$.
\end{proof}

\begin{coro}
Let $M$ be an immersed helix hypersurface in $\R^{n+1}$ with respect to an unitary direction $\d \in \R^{n+1}$.
Then the Ricci curvature of $M$ in direction of the tangent component of $\d$ is zero:
$$Ricc_M(T,T)=0 , $$
where $T= \d^{\top}/\|d^{\top}\|$.
\end{coro}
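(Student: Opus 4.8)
The plan is to combine the Ricci comparison formula (\ref{eqn:ricci-relation}) with the one feature that makes the hypersurface case special: the base is flat. Indeed, since $M^n \subset \R^{n+1}$ is a hypersurface and $H$ is an $n$-dimensional hyperplane orthogonal to $\d$, the projection $\pi|_M$ is an immersion between equidimensional manifolds, hence a local diffeomorphism, so $B = \pi(M)$ is an open subset of $H \cong \R^n$. Consequently the induced metric $g$ on $B$ is the flat Euclidean metric and its Ricci tensor vanishes identically, $Ric_g \equiv 0$; in particular $Ric_g(\nabla_g f, \nabla_g f) = 0$.

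Next I would feed this vanishing into formula (\ref{eqn:ricci-relation}), which reads
\[ Ric_h(\nabla_h f, \nabla_h f) = \frac{1}{(1+\|\nabla_g f\|^2)^2} \, Ric_g(\nabla_g f, \nabla_g f) = 0 . \]
Since $(M,H)$ is isometric to $(B,h)$ via $\phi$, this is precisely a statement about the intrinsic Ricci curvature of $M$, now in the direction $\nabla_h f$.

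It then remains to identify this direction with $T$. By Proposition \ref{prop:gradients} the gradient $\nabla_h f$ is a positive multiple of $\nabla_g f$, and by Remark \ref{rema:identificaciones} the unit field $\tilde{E}_1$ — itself a positive multiple of $\nabla_g f$ — is identified under $\phi$ with $T = \d^{\top}/\|\d^{\top}\|$. Hence $T$ and $\nabla_h f$ are positively proportional, and by the quadratic homogeneity of the Ricci form we conclude $Ricc_M(T,T) = Ric_h(T,T) = 0$, as claimed.

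The main (and essentially only) conceptual point is the first one: recognizing that a helix hypersurface has a flat base simply because the projection fills up the full hyperplane $H$, so that $Ric_g\equiv 0$ for free; everything afterwards is substitution into already-established formulas. The only care needed concerns the degenerate angles. For $\theta = \tfrac{\pi}{2}$ the field $T$ is not defined and the statement is vacuous, while for $\theta = 0$ the projection method does not apply, but in that case $M$ is a cylinder with $T = \d$ a constant direction whose flow lines are straight lines, so $Ricc_M(T,T) = 0$ holds directly.
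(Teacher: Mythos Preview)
Your proof is correct and follows essentially the same line as the paper's: both argue that for $\theta\notin\{0,\tfrac{\pi}{2}\}$ the base $B$ is an open piece of the Euclidean hyperplane $H$ (hence $Ric_g\equiv 0$), then invoke the Ricci comparison formula (\ref{eqn:ricci-relation}) and the identification of $\tilde E_1$ with $T$ from Remark \ref{rema:identificaciones}, while treating the cylinder case $\theta=0$ separately. Your handling of the degenerate angles and your explicit justification that $\pi|_M$ is a local diffeomorphism (equidimensional immersion) are slightly more detailed than the paper's, but the argument is the same.
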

\begin{proof}
If $M$ is a cylinder with direction $d$, we are ready.
Otherwise, let $B$ be as before: The orthogonal projection of $M$ into an open part of a hyperplane
$span \{ d^\perp \} $ orthogonal to $d$.
So, $B$ is Ricci-flat because it is an open part of a Euclidean space
and in particular $Ricc_g(\eta,\eta)=0$, where $\eta$ is the unitary projection of
$T$ into the hyperplane $span \{ d^\perp \}$. Let us observe that $\eta$ is a constant multiple of
$\nabla_g f$: By Remark \ref{rema:identificaciones},
$T=\frac{1}{\|\nabla_g f\| \sqrt{1 + \|\nabla_g f\|^2}} (\nabla_g f + \|\nabla_g f\|^2 \d) $
and so $\eta $ is a constant multiple of
$T -\frac{1}{\|\nabla_g f\| \sqrt{1 + \|\nabla_g f\|^2}}  \|\nabla_g f\|^2 \d =
\frac{1}{\|\nabla_g f\| \sqrt{1 + \|\nabla_g f\|^2}} \nabla_g f $. In fact since we are looking for
$\eta$ to be unitary in $(B,g)$ we deduce that $\eta= E_1 = \nabla_g f / \|\nabla_g f\|$.
Since $(M,H)$ and $(B,h)$ are isometric, $Ric_M(T,T) = Ric_h(\tilde{E_1},\tilde{E_1})$.
By Equation (\ref{eqn:ricci-relation}),  $Ric_h(\tilde{E_1},\tilde{E_1})$ is a constant multiple of $Ricc_g(\eta,\eta)$, see
Remark \ref{rema:identificaciones}.
This relations prove that $Ric_M(T,T)=0$.
\end{proof}
\begin{rem}
\em
Another proof of the above corollary is as follows. Notice that if $M$ is helix hypersurface then the vector field $T$ is in the relative nullity distribution i.e. the kernel of the shape operator.
So by Gauss equation the curvature tensor of $M$ vanish when contracted with $T$ hence $Ricc_M(T,T) = 0$.
\end{rem}

\begin{ejem}
\em
\label{ejem:funcion-armonica-eikonal}
Let us consider the Sol geometry: $(\R^3, g_{Sol})$, where the metric is $g_{Sol}= e^{2z} dx^2 + e^{-2z} dy^2 + dz^2$.
The function $f : \R^3 \longrightarrow \R$ given by $f(x,y,z)=z$ is harmonic, see Corollary 4.3 in \cite{Ou}.
This function is also eikonal, its gradient $ \nabla f = \partial_z $ has constant length, it satisfies that $\|\nabla f \|= 1$.
We should remark that the level hypersurfaces are minimal submanifolds but not totally geodesic,
because the latter condition is equivalent to the parallelism of the gradient vector field $\nabla f = \partial_z$.
We can see using the formula of Koszul that this vector field satisfies that $\nabla_{\partial_x} \partial_z = \partial_x$, i.e. $\partial_z$
is not a parallel vector field. Similarly, we have the following relations
\begin{eqnarray*}
\nabla_{\partial_x} \partial_x &=& -e^{2z} \partial_z, \ \nabla_{\partial_x} \partial_y = 0 , \nabla_{\partial_x} \partial_z =  \partial_x\\
\nabla_{\partial_y} \partial_y &=& e^{-2z} \partial_z , \ \nabla_{\partial_y} \partial_z = - \partial_y \\
\nabla_{\partial_z} \partial_z &=& 0 .
\end{eqnarray*}
Now, we are ready for the calculus of the Riemannian curvature tensor, for example
$$
R(\partial_x, \partial_y) \partial_x = e^{2z} \partial_y, \ R(\partial_x, \partial_z) \partial_x = -e^{2z} \partial_z .\\
$$
Therefore,
$$
\langle R(\partial_x, \partial_y) \partial_x , \partial_y \rangle = 1 , \
\langle R(\partial_x, \partial_z) \partial_x , \partial_z \rangle = -e^{2z}  .
$$
Finally, a direct calculus show that the Ricci curvature of this Sol geometry is
\begin{eqnarray*}
Ric(\partial_x, \partial_x)&=&0, \ Ric(\partial_y, \partial_y)= 0, \ Ric(\partial_z, \partial_z)= -2 \\
Ric(\partial_x, \partial_y)&=&0, \ Ric(\partial_x, \partial_z)= 0, \ Ric(\partial_y, \partial_z)= 0.
\end{eqnarray*}
\end{ejem}

From this we conclude that $(\R^3, g_{Sol})$ can not be isometrically immersed
as a minimal submanifold (even locally) in any euclidean space $\mathbb{R}^n$ of any dimension.
Indeed, assume that such isometric immersion do exists. Then from Gauss equation we get
 that the kernel of the Ricci tensor $\mathrm{ker}(Ric) = span \{ \partial_x, \partial_y\}$ is the kernel of the second fundamental form of
 the immersion, i.e. the so called relative nullity distribution. Since this distribution has dimension 2 we get that $(\R^3, g_{Sol})$ is a
 flat Riemannian manifold. This contradicts $Ric(\partial_z, \partial_z)= -2$ and prove our claim.

\subsection{Other results about minimal helices}

Let $M \subset \mathbb{R}^n$ be helix with respect to the direction $\d \in
\mathbb{R}^n$.
Let $\pi: \mathbb{R}^n \rightarrow span \{ \d^{\perp} \} $ be the projection to a normal
hyperplane $\d^{\perp}$ to $\d$.
Since we work locally we can assume that $\pi(M)$ is a submanifold of
$\mathbb{R}^{n-1} \cong span\{ \d^{\perp} \} $.



\begin{teor}{\label{baseN}} Let $M \subset \mathbb{R}^n$ be a full minimal
helix of any codimension with respect to the direction $\d \in \mathbb{R}^n$.
If the Ricci curvature of the submanifold $B:=\pi(M)$ is non-negative then
$M$ is a totally geodesic submanifold of $\mathbb{R}^n$.
\end{teor}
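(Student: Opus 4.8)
The plan is to run the projection method, representing $M$ as the graph of its eikonal function $f$ over the base $B=\pi(M)\subset H\cong\R^{n-1}$ equipped with the induced metric $g$, and then to feed the harmonicity and eikonality of $f$ into Bochner's formula, converting the outcome into the vanishing of $\alpha_M$ via two applications of the Gauss equation. I first treat the generic angle $\theta\in(0,\tfrac{\pi}{2})$, where the projection method applies: the value $\theta=\tfrac{\pi}{2}$ cannot occur, since then $\d$ would be everywhere normal and $M$ would lie in a hyperplane, contradicting fullness, while the cylinder case $\theta=0$ is disposed of at the end.

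First I would extract the consequences of minimality. Since $M$ is minimal, Theorem \ref{formulae} gives $\Delta_B f=0$ together with $\mathbf{H}_B=\dfrac{\alpha_B(\nabla_B f,\nabla_B f)}{1+\|\nabla_B f\|^2}=c\,\alpha_B(E_1,E_1)$, where $E_1:=\nabla_B f/\|\nabla_B f\|$ and $c:=\dfrac{\|\nabla_B f\|^2}{1+\|\nabla_B f\|^2}\in(0,1)$; by Theorem \ref{teor:eikonalvshelix} the function $f$ is eikonal, so $\|\nabla_B f\|$ is constant. Inserting $\Delta_B f=0$ and $\Delta_B\|\nabla_B f\|^2=0$ into Bochner's formula
\[ \tfrac12\,\Delta_B\|\nabla_B f\|^2=\|\mathrm{Hess}_B f\|^2+\langle\nabla_B f,\nabla_B\Delta_B f\rangle+\mathrm{Ric}_B(\nabla_B f,\nabla_B f) \]
leaves $0=\|\mathrm{Hess}_B f\|^2+\mathrm{Ric}_B(\nabla_B f,\nabla_B f)$. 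Because $\mathrm{Ric}_B\ge0$ by hypothesis, both summands vanish; in particular $\mathrm{Hess}_B f\equiv0$, i.e. the unit field $E_1$ is parallel on $(B,g)$. This is the key output of the whole argument.

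Next I would turn parallelism of $E_1$ into the vanishing of $\alpha_M$. Parallelism gives $\langle\nabla_{E_i}E_j,\nabla_B f\rangle=\|\nabla_B f\|\,\langle\nabla_{E_i}E_j,E_1\rangle=0$, which by the formula for $\langle\alpha_M(X_i,X_j),N\rangle$ in the proof of Theorem \ref{formulae} kills the $N$-component of $\alpha_M$. It remains to prove $\alpha_B\equiv0$, which kills the $\xi_k$-components. Parallelism of $E_1$ forces $R^B(\,\cdot\,,\cdot)E_1=0$, so the Gauss equation of $B\subset\R^{n-1}$ gives, for every $j$, $\|\alpha_B(E_1,E_j)\|^2=\langle\alpha_B(E_1,E_1),\alpha_B(E_j,E_j)\rangle$; summing over $j$ and using $\mathbf{H}_B=c\,\alpha_B(E_1,E_1)$ yields
\[ \sum_j\|\alpha_B(E_1,E_j)\|^2=c\,\|\alpha_B(E_1,E_1)\|^2 . \]
Since the left side dominates its $j=1$ term $\|\alpha_B(E_1,E_1)\|^2$ while $c<1$, every term must vanish; in particular $\mathbf{H}_B=c\,\alpha_B(E_1,E_1)=0$, so $B$ is a minimal submanifold of $\R^{n-1}$.

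Finally I would apply the Gauss equation a second time. For a minimal submanifold of Euclidean space one has $\mathrm{Ric}_B(X,X)=\langle\alpha_B(X,X),\mathbf{H}_B\rangle-\sum_j\|\alpha_B(X,E_j)\|^2=-\sum_j\|\alpha_B(X,E_j)\|^2\le0$; combined with $\mathrm{Ric}_B\ge0$ this forces $\alpha_B(X,E_j)=0$ for all $X,j$, hence $\alpha_B\equiv0$. Then $\langle\alpha_M(X_i,X_j),\xi_k\rangle=\langle\alpha_B(E_i,E_j),\xi_k\rangle=0$ as well, so $\alpha_M\equiv0$ and $M$ is totally geodesic. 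The cylinder case $\theta=0$ is similar: there $M=\R\times N$ with $B=\pi(M)=N$ minimal in $\R^{n-1}$, and the same second Gauss-equation step applied to $N$ makes $N$, hence $M$, totally geodesic. I expect the main obstacle to be the passage from the purely intrinsic statement $\mathrm{Hess}_B f\equiv0$ to extrinsic data: it is the first Gauss-equation computation, which couples $E_1$ being parallel with the minimality relation $\mathbf{H}_B=c\,\alpha_B(E_1,E_1)$ to prove that $B$ is minimal, after which the sign clash between the hypothesis $\mathrm{Ric}_B\ge0$ and the automatic inequality $\mathrm{Ric}_B\le0$ for minimal Euclidean submanifolds closes the proof.
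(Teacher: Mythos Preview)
Your proof is correct and follows essentially the same route as the paper's: Bochner's formula turns eikonality plus harmonicity of $f$ (from Theorem~\ref{formulae}) and $\mathrm{Ric}_B\ge0$ into $\mathrm{Hess}_B f\equiv0$; then the Gauss equation together with the relation $\mathbf{H}_B=c\,\alpha_B(E_1,E_1)$ forces $E_1$ into the relative nullity of $B$, hence $B$ is minimal; and finally minimality plus $\mathrm{Ric}_B\ge0$ makes $B$ totally geodesic. The only cosmetic differences are that the paper feeds $\mathrm{Ric}_B(\nabla_B f,\nabla_B f)=0$ directly into the Gauss--Ricci identity (whereas you pass through the vanishing of each sectional curvature $K(E_1,E_j)$ and then sum), and the paper finishes by noting $f$ is linear on the flat $B$, while you read off $\alpha_M\equiv0$ straight from the component formulas in the proof of Theorem~\ref{formulae}.
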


\it Proof. \rm
If $M$ is a cylinder, then $B$ is minimal with non-negative Ricci curvature and therefore
$(B=\pi(M),g)$ is totally geodesic. It is a consequence that a cylinder over a totally geodesic
submanifold is also totally geodesic.
Otherwise, we can apply the projection method
where is important the condition  $\theta \neq 0$.
By \cite[Theorem ]{mona} we have that locally the immersion $M \subset
\mathbb{R}^n$ is given as \[ \phi(p) =  (p, f(p)) \, \]
where $M = \phi(B)$ locally.
Notice that $\phi: (B,h) \rightarrow M \subset  \mathbb{R}^n$ is a isometry.
The function $f$ is eikonal either in $(B=\pi(M) , g) $ or $(B , g)$  and Theorem \ref{formulae} implies
$\Delta_B f := \Delta_g f = 0$. Bochner's formula for functions
together with the hypothesis that $(B,g)$ has non-negative Ricci curvature
implies that the $\nabla_B f:=\nabla_g f$ is a parallel vector field of $(B ,g)$
and therefore, $\nabla_h f$ is parallel in $(B,h)$. Since $\phi$ is a isometry and by
Remark \ref{rema:identificaciones}, $\phi_*(\tilde{E_1}) = T$ we deduce that
$T$ is parallel in $M$.
In particular $\mathrm{Ric}_B (\nabla_g f) = 0$.
By using Gauss equation we have
\[ \mathrm{Ric}_B(\nabla_B f) = \langle A_{\mathrm{H}_B}(\nabla_B f) ,
\nabla_B f \rangle - \sum_{i=1}^{\mathrm{dim}(B)} \| \alpha(\nabla_B f,
E_i)\|^2 \, .\]
Then from Theorem \ref{formulae} we get \[ 0 = \langle
\frac{\alpha_B(\nabla_B f, \nabla_B f)}{1 + \|\nabla_B f\|^2}  ,
\alpha(\nabla_B f,\nabla_B f) \rangle  - \sum_{i=1}^{\mathrm{dim}(B)} \|
\alpha(\nabla_B f, E_i)\|^2 \, .\]
Setting $E_1 := \frac{\nabla_B f}{\|\nabla_B f\|} $ we get
\[ 0 = \frac{\|\alpha(\nabla_B f, \nabla_B f) \|^2}{1 + \|\nabla_B f \|^2} -
\frac{\|\alpha(\nabla_B f, \nabla_B f) \|^2}{\| \nabla_B f \|^2} -
\sum_{i=2}^{\mathrm{dim}(B)} \| \alpha(\nabla_B f, E_i)\|^2 \]
and so
\[ 0 = \frac{-\|\alpha(\nabla_B f, \nabla_B f) \|^2}{(1 + \|\nabla_B
f \|^2)\|\nabla_B f \|^2} - \sum_{i=2}^{\mathrm{dim}(B)} \| \alpha(\nabla_B
f, E_i)\|^2 \, .\]

Thus, $\alpha_B(\nabla_B f, \nabla_B f) = \alpha_B(\nabla_B f, E_i) = 0$
for $i=2,\cdots,\mathrm{dim}(B)$. Then
$\nabla_B f$ is in the nullity of the second fundamental form.
By Theorem \ref{formulae}, $(B,g)$ is minimal.
Then $B$ is a
minimal submanifold with non-negative Ricci tensor. It follows that $B$ is a
totally geodesic submanifold. Since, $f$ is eikonal and harmonic in $(B,g)$
with $B$ an Euclidean space we have that $f$ is a linear function and
so its graph over $B$ is other Euclidean space, i.e.
$M=\phi(B)$ is a totally geodesic submanifold. $\Box$

%
%
%
%

\section{Helix hypersurfaces with constant mean curvature}

In this section we give a proof of the following theorem which generalize Corollary 4.2 in \cite{GPR}.
For the proof we need the following corollary of the maximum principle for harmonic maps in \cite[Theorem 2]{Sam}.
\begin{lema} Let $f:M \rightarrow N$ be a harmonic map between the Riemannian manifolds $M,N$.
Assume that $f(M)$ is contained in the hypersurface $H \subset N$. If the shape operator of $H$ is definite
then $f$ is a constant map.
\end{lema}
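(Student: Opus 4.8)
The plan is to reduce the statement to Sampson's maximum principle by feeding into it the signed distance function of $H$, whose Hessian along $H$ is precisely the (definite) second fundamental form. The computation is local, so I fix an arbitrary point $p \in M$ and work near $f(p) \in H$.

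First I would introduce the signed distance function $\rho$ to $H$ on a tubular neighbourhood of $f(p)$ in $N$: it is smooth, satisfies $\|\nabla \rho\| = 1$, and its gradient $\xi := \nabla \rho$ restricts along $H$ to a unit normal field. For vectors $X,Y$ tangent to $H$ one has $\mathrm{Hess}\,\rho(X,Y) = \langle \nabla_X \xi, Y\rangle = -\langle A_\xi X, Y\rangle$, so that $\mathrm{Hess}\,\rho$ restricted to $TH$ coincides, up to sign, with the shape operator of $H$; by hypothesis it is therefore a \emph{definite} bilinear form. Because $f(M) \subset H = \{\rho = 0\}$, the function $\rho \circ f$ is identically $0$, and for any $v \in T_qM$ the image $df(v)$ is tangent to $H$.

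Next I would apply the second-order composition formula that underlies \cite[Theorem 2]{Sam}: for a harmonic map $f$ and a function $\rho$ on the target,
\[ \Delta_M(\rho \circ f) = \sum_{i} \mathrm{Hess}\,\rho\big(df(e_i), df(e_i)\big) + d\rho\big(\tau(f)\big), \]
where $\{e_i\}$ is a local orthonormal frame on $M$ and $\tau(f)$ is the tension field. Since $f$ is harmonic, $\tau(f) = 0$, and since $\rho \circ f \equiv 0$ the left-hand side vanishes. Using that each $df(e_i)$ is tangent to $H$, the previous paragraph gives
\[ 0 = \sum_i \mathrm{Hess}\,\rho\big(df(e_i), df(e_i)\big) = -\sum_i \langle A_\xi\, df(e_i), df(e_i)\rangle. \]
As $A_\xi$ is definite, every summand has the same strict sign, so the sum vanishes only if $df(e_i) = 0$ for all $i$; hence $df = 0$ near $p$. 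Since $p$ was arbitrary and $M$ is connected, $f$ is constant, as claimed.

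The main thing to get right is the identification of $\mathrm{Hess}\,\rho|_{TH}$ with the definite shape operator and the verification that $df$ never leaves $TH$ — both of which hold precisely because the image lies in $H$ — so that the definiteness hypothesis of Sampson's principle is actually met; the remainder is the standard composition formula and elementary positivity. Equivalently, invoking \cite[Theorem 2]{Sam} directly: $\rho \circ f$ attains the constant value $0$ everywhere, and the definiteness of the second fundamental form of $H$ forces the harmonic map to be constant.
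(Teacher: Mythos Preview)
Your proof is correct and is essentially the paper's approach: the paper simply states the lemma as a corollary of Sampson's maximum principle \cite[Theorem 2]{Sam} without further argument, and what you have written is precisely the standard proof of that principle (via the signed distance function and the composition formula $\Delta_M(\rho\circ f)=\sum_i \mathrm{Hess}\,\rho(df(e_i),df(e_i))$) specialized to the case where the image sits on the hypersurface itself. In other words, you have unpacked the citation; there is no substantive difference in method.
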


{\bf Proof of Theorem \ref{meancurvature} }. \rm If the helix angle is zero then it is clear that the hypersurface is a cylinder.
So assume that the constant angle is different from zero. So a normal vector is not perpendicular the constant direction $\d$.
Observe that the subset $H$ of the sphere consisting of vectors whose angle with a fix vector $\d$ is constant different from $\frac{\pi}{2}$
is a totally umbilical non-totally geodesic submanifold. Hence the shape operator of $H$ is definite.
Now by Ruh-Vilms' theorem \cite{RuVi} the Gauss map of our helix surface is harmonic.
By the previous observation the image of such Gauss map is contained in the hypersurface $H$.
Then by the above lemma the Gauss map is constant. Hence the helix hypersurface is an open subset of some hyperplane. $\Box$\\

Unfortunately the above idea does not work for higher codimensional helix submanifolds. Let us explain where is the problem.
Let $\mathrm{G}(n,r)$ be the Grassmanian of $r$-planes in $\mathbb{R}^n$. For $\d \in \mathbb{R}^n $
define $H(\d, \theta) \subset \mathrm{G}(n,r)$ as the subset of $r$-planes whose angle with $\d$ is $\theta$.
Notice that for $\theta \neq 0$ the subset $H(\d, \theta) \subset \mathrm{G}(n,r)$ is a smooth hypersurface.
It is not difficult to see that $H(\d, \theta) \subset \mathrm{G}(n,r)$ is an orbit of the natural action of the
subgroup $SO(n)_{\d}$ of $SO(n)$ which leaves $\d$ fixed, i.e. the isotropy subgroup of $\d$.
The subgroup $SO(n)_{\d}$ is symmetric in $SO(n)$. Indeed, it is the fixed subgroup associated to the
involution $\sigma$ of $SO(n)$ induced by the symmetry with respect to the hyperplane $\langle \d, \cdot \rangle = 0$ in $\R^n$.
The principal curvatures of the orbits  $H(\d, \theta) \subset \mathrm{G}(n,r)$ were computed in \cite[p.65, Proposition 6]{Ver}.
So we see that unless the Grassmanian $\mathrm{G}(n,r)$ is a projective space the shape operator of the hypersurfaces
$H(\d, \theta) \subset \mathrm{G}(n,r)$ is never definite. Notice that the dimension of $H(\d, 0)$ is $(r-1)(n-r)$ so
if the codimension $n-r$ is greater than one $H(\d, 0)$ is not a hypersurface of $\mathrm{G}(n,r)$. Finally, in
codimension one the hypersurface $H(\d, 0)$ is totally geodesic hence its shape operator is non-definite.
So this explains the existence of non-totally geodesic cylinders over hypersurfaces with constant mean curvature.

\section*{Acknowledgements}
We would like to thank Francisco Vittone for several useful comments.
The second author thanks the hospitality of DISMA at Politecnico di Torino where this work was started.

\noindent {\bf Authors' Addresses:}

\vspace{.5cm}

\noindent A. J. Di Scala, \\ Dipartimento di Scienze Matematiche, Politecnico di Torino, \\
Corso Duca degli Abruzzi 24, 10129 Torino, Italy \\
\href{mailto:antonio.discala@polito.it}{antonio.discala@polito.it}\\
\url{http://calvino.polito.it/~adiscala/}

\vspace{.5cm}

\noindent G. Ruiz-Hern\'andez, \\ Instituto de Matem\'aticas, Universidad Nacional Aut\'onoma de M\'exico, \\
Ciudad Universitaria, C.P. 04510 D.F. M\'exico \\
\href{mailto:gruiz@matem.unam.mx}{gruiz@matem.unam.mx}\\

\end{document}